	\newcommand{\blind}{0}
    \renewcommand\section{\@startsection {section}{1}{\z@}%
                                       {-3.5ex \@plus -1ex \@minus -.2ex}%
                                       {2.3ex \@plus.2ex}%
                                       {\normalfont\fontfamily{phv}\fontsize{16}{19}\bfseries}}
    \renewcommand\subsection{\@startsection{subsection}{2}{\z@}%
                                         {-3.25ex\@plus -1ex \@minus -.2ex}%
                                         {1.5ex \@plus .2ex}%
                                         {\normalfont\fontfamily{phv}\fontsize{14}{17}\bfseries}}
    \renewcommand\subsubsection{\@startsection{subsubsection}{3}{\z@}%
                                        {-3.25ex\@plus -1ex \@minus -.2ex}%
                                         {1.5ex \@plus .2ex}%
                                         {\normalfont\normalsize\fontfamily{phv}\fontsize{14}{17}\selectfont}}
\let\turkc\c
\newcommand{\C}{C}                      
\renewcommand{\c}{C'}                   
\renewcommand{\a}{a}               
\renewcommand{\P}{\gamma\cdot \eta}     
\renewcommand{\H}{P\cdot y_H}             
\renewcommand{\L}{P\cdot y_L}             
\newtheorem{theorem}{Theorem}
\newtheorem{corollary}{Corollary}
\newtheorem{definition}{Definition}
\begin{document}

		\def\spacingset#1{\renewcommand{\baselinestretch}%
			{#1}\small\normalsize} \spacingset{1}
		
		\if0\blind
		{
			\title{\bf Analyzing Wage Theft in Day Labor Markets via Principal Agent Models}
			\author{James P. Bailey$^a$ and Bahar \turkc{C}avdar$^a$ and Yanling Chang$^b$ \\
			$^a$ \small{Department of Industrial and Systems Engineering,}\\ \small{Rensselaer Polytechnic Institute, Troy, NY} \\
             $^b$ \small{Department of Operations Research and Engineering Management},\\ \small{Southern Methodist University,  Dallas, TX}}
			\date{}
			\maketitle
		} \fi
		
		\if1\blind
		{

            \title{\bf \emph{IISE Transactions} \LaTeX \ Template}
			\author{Author information is purposely removed for double-blind review}
			
\bigskip
			\bigskip
			\bigskip
			\begin{center}
				{\LARGE\bf \emph{IISE Transactions} \LaTeX \ Template}
			\end{center}
			\medskip
		} \fi
		\bigskip
		
	\begin{abstract}
In day labor markets, workers are particularly vulnerable to wage theft. This paper introduces a principal-agent model to analyze the conditions required to mitigate wage theft through fines and establishes the necessary and sufficient conditions to reduce theft. We find that the fines necessary to eliminate theft are significantly larger than those imposed by current labor laws, making wage theft likely to persist under penalty-based methods alone. Through numerical analysis, we show how wage theft disproportionately affects workers with lower reservation utilities and observe that workers with similar reservation utilities experience comparable impacts, regardless of their skill levels. To address the limitations of penalty-based approaches, we extend the model to a dynamic game incorporating worker awareness. We prove that wage theft can be fully eliminated if workers accurately predict theft using historical data and employers follow optimal fixed wage strategy. 
Additionally, sharing wage theft information becomes an effective long-term solution when employers use any given fixed wage strategies, emphasizing the importance of raising worker awareness through various channels.

	\end{abstract}
			
	\noindent%
	{\it Keywords:} Wage Theft, Day Labor Market, Principal-Agent Model, Game Theory

	\spacingset{1.5} 




\maketitle

\section{Introduction}\label{sec:Intro}
The day labor market in the United States has seen significant growth due to the complex interplay between labor supply and demand, transformative industrial shifts, and increased migration. Day laborers, who find substantial employment within the construction sector and related services, are sought after for their affordability, flexibility, and on-demand availability. 
It is estimated that  approximately 117,600 day laborers operate in the U.S. on any given day, with 49 percent hired by homeowners and renters, and 43 percent by construction contractors. 
Their primary occupations are construction laborers, gardeners, landscapers, painters, roofers, and drywall installers \citep{ruan2013, Valenzuela2006}.  
Every morning, these workers congregate at highly visible street corners, store parking lots, or worker advocacy centers waiting for employment opportunities. 
However, the absence of formal job protections exposes them to various risks including unsafe working conditions, severe injuries, and employer abuses. 
Among these, wage theft stands out as a particularly prevalent form of workplace exploitation.

Wage theft, commonly referred to as wage and hour violations in legal contexts, includes any unlawful actions that deprive employees of their rightful wages or benefits 
\citep{HernandezStepick2012, Dombrowski2017}.  These unlawful actions include misclassification of employees, minimum wage breaches, unpaid work hours, improper pay deductions, and denial of breaks \citep{Bobo2011, NCL2015, Robinson2011}.  A landmark survey of 4,387 low-wage workers in the three largest U.S. cities (New York City, Los Angeles, and Chicago) showed that more than 2/3 of workers suffered from at least one pay violation in the past workweek 
\citep{Bernhardt2009}, and the \textit{Employer Policy Foundation} estimated that workers would receive an additional 19 billion dollars annually if employers strictly followed wage and hour laws 
\citep{ruan2013}.
This problem has likely increased in recent years with a decrease in wage theft investigators; as of 2023, the Wage and Hour Division operates with a historical low of 733 investigators for the U.S. workforce, yielding approximately a ratio of one investigator for every 225,000 workers \citep{DOL2023}. 

In most cases, construction work is conducted through subcontracting, typically characterized by verbal agreements between subcontractors and workers, with wages frequently paid in cash. These practices create an unregulated environment where employers frequently fail to pay workers the owed wages \citep{Bobo2011, Bernhardt2009}. Furthermore, workers often refrain from reporting wage violations due to inadequate law enforcement,  restricted jurisdiction, and the low likelihood of inspections \citep{ruan2013}. 
In addition, many workers lack awareness of their rights or where to seek help for wage theft, and fear of immigration-related ramifications further discourages unauthorized migrants from reporting violations \citep{Theodore2016, Fussell2011, Lee2014}. These factors leave workers with limited recourse, forcing them to either silently accept wage theft, or endure lengthy delays in government response that may never come.

Several initiatives to address wage theft have been developed by both governmental and non-governmental organizations. For instance, Miami-Dade County in Florida introduced an ordinance to streamline the reporting and resolution of wage theft, creating a legal framework that enables workers to claim unpaid wages \citep{HernandezStepick2012}. Community-based organizations, often collaborating within wage theft task forces, contribute by advocating for workers' rights, providing educational resources, and raising public awareness about wage theft \citep{HernandezStepick2012}. Furthermore, the establishment of day labor and worker centers, such as Casa Latina in Seattle\footnote{https://casa-latina.org/work/day-worker-center/}, is crucial; 
these centers offer formal hiring processes that typically yield higher wages than informal sites \citep{Lee2014}. By negotiating better pay and promoting fair labor practices, these centers significantly enhance the income potential and workplace conditions for day laborers \citep{ Theodore2016, Melendez2014, kiley2024casa}. 

While these efforts provide valuable support to workers, current initiatives have not satisfactorily addressed wage theft issues, primarily due to limitations in enforcement capabilities, resource constraints, and the complex nature of the informal labor market. At the same time, academic research on this topic is still in its early stage, with sparse work seeking to gain a deeper understanding and develop more effective strategies to combat wage theft. Among those limited work,  \cite{CooperKroeger2017} conducted an empirical study using data from the \textit{Current Population Survey}\footnote{The Current Population Survey is the longest-standing government survey of labor market conditions in the United States and it is widely recognized as the primary public source of hourly wage data.}, highlighting the severity and prevalence of minimum wage violations -- a specific form of wage theft. Their results show  that (i) millions of workers are affected annually by this type of wage theft alone, and (ii) wage theft not only diminishes worker benefits but also significantly reduces government tax revenues from payroll and income taxes, effectively stealing from taxpayers who may end up supporting affected families through welfare programs. Analyzing the same data source, \cite{CLEMENS2022102285} showed that increases in the minimum wage are correlated with a rise in wage theft, accounting for 12 to 17 percent of the wage gains. \cite{KimAllmang2021} proposed a conceptual framework to analyze the primary causes of wage theft, attributing it to the low cost of enforcement and penalties due to outdated labor standards. They also reviewed policy measures aimed at updating these standards and improving enforcement. The seminal work of \cite{Weil2014} examined how changes in employment relationship, including subcontracting and outsourcing, have led to deteriorating labor conditions and increased instances of wage theft. \cite{Harkins2020} demonstrated that wage exploitation was a primary motive for the employment of migrant workers. \cite{DistelhorstMcGahan2022} used stakeholder theory and data from small manufacturers in emerging-market countries to show that abusive discipline and wage theft are linked to inferior firm-level manufacturing outcomes (such as late deliveries and low-quality products). Their analysis suggested that multinational corporations could enhance their corporate social performance by encouraging their suppliers to adopt value creation systems that prioritize the development of workers' human capital.

Despite the valuable insights provided by these empirical studies, theoretical analysis on wage theft and the broader issue of labor exploitation remain rare. Principal-agent models are particularly well-suited for analyzing these issues because they effectively capture the asymmetric information and conflicts of interest inherent in employer-employee relationships. Previous works, such as \cite{Chwe1990} and \cite{AcemogluWolitzky2011}, have utilized the principal-agent framework to explore coercion in labor markets. Similarly, \cite{BasuChau2004} applied this model to examine child labor and debt bondage, highlighting the dynamics between landlords and tenants in trapping children in labor. However, to our best knowledge, no theoretical work has specifically addressed wage theft. This paper fills this gap by employing a principal-agent framework to analyze wage theft, representing a first theoretical investigation of this issue.

In this paper, we consider a typical day labor market where the employer (the principal) offers a verbal contract specifying the wage paid to the worker (the agent) based on observed outcomes. The worker’s effort level probabilistically affects the output produced, and the worker decides on the level of effort to exert by balancing the disutility of effort against the expected wage. 
A third party, such as a non-profit organization, work center, or governmental agency, can inspect and penalize the employer if wage theft is detected. We explicitly model potential wage theft as part of the employer’s objective function. 
We consider two scenarios. In the first scenario, the worker is unaware of potential wage theft; while in the second scenario, the worker is aware of the employer’s wage theft history, possibly through social platforms or word-of-mouth, improving their situational awareness. We model the first scenario by a one-shot game, and the second by a repeated game. For both scenarios, we analyze the factors affecting wage theft and identify conditions under which wage theft can be prevented or reduced. The second scenario also sheds light on the value of improving worker situational awareness.

We formulate both scenarios as non-linear optimization problems using the principal-agent framework and provide a partial characterization of the optimal solution. Our theoretical results show that the problem can be reduced to a one-dimensional search, establishing a necessary foundation for analyzing the structure of the wage theft problem. This approach allows us to derive the following managerial insights. 
\begin{enumerate}[(i)]
    \item Establishing significant penalties is essential for effective wage theft mitigation. Our analysis indicates that even with frequent inspections, wage theft will persist if the penalties are insufficient (Theorem \ref{thm:eliminateTheft}). 
    This aligns with empirical evidence showing that strong penalties can effectively deter wage theft \citep{Galvin2016}.
    Currently, employers found guilty of underpaying employees often repay only a portion of the stolen wages and face few or low additional penalties. This insufficient penalization incentivizes employers to continue violating wage laws \citep{Galvin2016, kim2020wage}. 
    The necessary penalties we derive are drastically larger than those used in practice (e.g., \citealt{Backpay}), and therefore penalty-based methods alone are unlikely to eliminate wage theft.
Furthermore, our experiments reveal that increased inspections and penalties have no adverse effect on worker effort or project outcomes. These findings suggest that imposing stricter penalties and inspections not only curbs wage theft but may also enhance worker productivity, supporting the case for more robust enforcement of wage laws.

    \item Wage theft has a higher relative impact on workers with low reservation utilities, namely, workers with fewer and lower paying employment opportunities. Workers with lower reservation utilities are paid less and, as a result, have a larger portion of wages stolen. This finding may explain why particular groups such as migrants, undocumented workers, temporary laborers, and trafficked individuals, who commonly have lower reservation utilities, are reported to be more susceptible to wage theft by existing observations \citep{Bernhardt2009, Theodore2016}.

\item Workers with different skill levels experience similar amounts of wage theft when they have the same reservation utilities. Our computational results indicate that even if workers become more skilled, they will still face the same level of wage theft if their outside employment opportunities do not improve.
This is especially significant for vulnerable populations, where gaining new skills does not always lead to better job prospects or higher reservation utilities. However, it's important to note that our experiments isolate the impact of skill level, without considering changes in employment opportunities. For less vulnerable populations, improving skills often results in better job opportunities and higher reservation utilities, which can indirectly reduce the impact of wage theft.

        \item In addition to relying on external agency inspections and law enforcement to deter wage theft, we employ a repeated game model to theoretically examine the value of improving worker awareness, as some existing studies have suggested its potential benefits (e.g., \citealt{fine2010strengthening}). We prove that if the employer adopts optimal fixed wage strategies and the worker accurately predicts wage theft based on released historical data, wage theft can be fully eliminated. Furthermore, if the employer uses any given fixed wage strategies (not necessarily optimal), sharing information about wage theft becomes an effective long-term mitigation strategy, underscoring the importance of raising worker awareness through various channels such as social media, worker centers, community outreach programs, educational workshops, and collaboration with labor unions.
\end{enumerate}

Our paper is organized as follows. We state the wage theft problem in Section \ref{Sec:Model}. Section \ref{Sec:noAware} analyzes the case where the worker has no awareness of potential wage theft using a principal-agent model and presents a partial characterization of the optimal solution. Numerical analysis is performed in Section \ref{Sec:Numercial}, where we analyze the impact of penalty, reservation utility, and the worker's cost function on wage theft. In Section \ref{Sec:HasAwareness}, we theoretically examine the value of improving worker awareness on wage theft using repeated game theory. We conclude the paper in Section \ref{Sec:Conclusions} and provide directions for future research.

\section{Model Formulation}
\label{Sec:Model}

We model the wage exploitation environment through a principal-agent (employer-worker) model with the intervention of a third party (external agent), where the third part could be The Wage and Hour Division of the United States Department of Labor (WHD-DOL), local public security departments, non-profit anti-trafficking organizations, or worker centers. 
In this model, the employer takes over a one-day project and recruits a worker to engage in it. The project yields $y_H$ units of commodity products if successful and $y_L$ if unsuccessful, where $y_H>y_L \geq 0$.

At the recruitment stage, the employer specifies a \textit{verbal} output-dependent ``contract": the worker will receive a promised wage $w_i\geq 0$, $i \in \{H,L\}$ for high $(y_H)$ and low $(y_L)$ output, respectively. 
The worker may choose to either accept or reject the contract. 
If the offer is rejected, the worker receives payoff equal to his/her outside option (or reservation utility) $u$, and the employer receives zero payoff. 
If the contract offer is accepted, then the worker decides to exert effort level $a \in [0,1)$ at cost $\C(a)$, where $a$ is also the probability of obtaining output $y_H$. 
Following the common assumptions on the cost of worker's efforts in the labor economy literature, we assume that $\C(0)=0$, and function $\C(\cdot)$ is increasing, strictly convex, twice differentiable, and $\lim_{a \rightarrow 1} \C(a) = \infty$ \citep{AcemogluWolitzky2011}. 
We remark that this also implies $\c(a)\to \infty$ as $a\to 1$; if $\c(a)$ is bounded by $w$, then $\C(a)=\int_0^a \c(x)dx\leq w\cdot a\leq w$ is also bounded. 

We denote the market price for the unit product by $P$ and assume that the employer cannot alter the product price. 
At the end of the working day, the employer may act against the verbal agreement and withhold $b_i$ amount of the promised wage for $i\in  \{H,L\}$. 
If $b_i>0$, an incidence of wage theft occurs. 
We denote the probability that the external agent inspects the employer by $\gamma$. We assume that each inspection can detect the wage theft if it exists, so we will use the inspection and detection rates interchangeably. 
We model the resulting penalty through a function $\eta(b_i)$,  where $\eta(\cdot )$ is strictly convex, increasing, and twice differentiable with $\eta(0)=0$.

Considering this setting, we study the wage theft problem under two scenarios. In the first scenario, we assume that the worker has no awareness of the wage theft; in the second scenario, we consider cases where the worker has information about the wage theft. 

\section{Worker with No Prior Knowledge of Wage Theft}
\label{Sec:noAware}
When the worker has no awareness of potential wage theft, the worker develops a strategy based on the verbal contract only, as in the base principal-agent problem. 
However, the employer can offer a different contract by withholding some of the wage at the end of the day with the risk of financial consequences if caught by the inspector. 
Given the sequence of events in the game, the optimal verbal contract is a solution to the following optimization problem:
\begin{align}
    \max_{\a, w_H, w_L, b_H, b_L} &\ \a\cdot (\H - w_H +b_H - \P(b_H))+(1-\a)\cdot (\L - w_L +b_L - \P(b_L)) \label{Eq:Obj}\tag{Employer Profit}\\
  \a &\in \arg\max_{a'\in [0,1)} \left\{ \a'\cdot w_H + (1-\a')\cdot w_L - \C(\a') \right\}\label{Eq:IC}\tag{Incentive Compatibility}\\
   \a&\cdot w_H + (1-\a)\cdot w_L - \C(a) \geq u\label{Eq:IR}\tag{Individual Rationality}\\  
    0&\leq b_L \leq w_L, \ \
    0\leq b_H \leq w_H, \label{Eq:theft}\tag{Wage Bounds}\\
    \a& \in [0,1) \tag{Probability of High Outcome}
    \label{Eq:Bounds}
\end{align}

The (\ref{Eq:Obj}) objective function calculates the principal's expected profit considering the verbal contract, the best response of the worker, and the gain and cost of wage theft. 
The (\ref{Eq:IC}) constraint determines the worker's best response to the verbal contract. 
The (\ref{Eq:IR}) constraint ensures the agent will only participate if accepting the offer is better than their reservation utility (outside option). 
Constraints (\ref{Eq:theft}) state that the wage theft amounts are limited by the verbally offered wages in the contract. 
The  (\ref{Eq:Bounds}) constraints provide the bounds for the agent's effort.

In the following section, we explore the structural properties of the optimal solution for the principal's problem to determine the verbal contract under wage theft.
We begin by providing a partial characterization for this principal-agent model. 
Specifically, we determine the optimal wage and wage thefts needed to induce a fixed given effort level $a$, i.e., $w_i^*(a)$ and $b_i^*(a)$ as functions of the effort level $a$, for $i\in \{H,L\}$.

\subsection{Mitigating Theft via Fines}\label{sec:Fines}

We begin by determining the optimal theft by the employer as a function of the effort level and offered wages.

\begin{definition}[Principal's ideal wage theft]
\label{def:Ideal}
We call the wage theft amount $\beta$ that satisfies $ \P'(\beta)=1$ the principal's ideal wage theft.
\end{definition}

The principal's ideal wage theft is the amount the principal would steal from the worker if there were no penalties; it represents where the marginal benefit of theft matches the marginal cost due to the penalty of theft. 
Using this definition, we present initial results, which also lead to reducing the original optimization problem to a problem involving only a single decision variable.

\begin{restatable}[Optimal Theft]{proposition}{IdealTheft}
\label{prop:IdealTheft}
Let $w_i^*(a)$ be an optimal promised wage that induces arbitrary (not necessarily optimal) effort level $a\in [0,1)$. 
Then a corresponding best wage theft amount for the employer is $b_i^*(a)=\min\{\beta, w_i^*(a)\}$ for $i\in \{L,H\}$. 
Furthermore, if $a\in (0,1)$, then $b_i^*(a)=\min\{\beta, w_i^*(a)\}$ is the unique optimal wage theft. 
\end{restatable}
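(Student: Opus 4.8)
The plan is to exploit the fact that, once an effort level $a\in[0,1)$ and the inducing wages $w_H^*(a),w_L^*(a)$ are fixed, the theft variables $b_H,b_L$ appear in none of the constraints of the employer's problem except the box constraints $0\le b_i\le w_i^*(a)$ from (\ref{Eq:theft}): the (\ref{Eq:IC}) and (\ref{Eq:IR}) constraints and the bounds on $a$ involve only $a,w_H,w_L$. Moreover the $b_i$ enter the (\ref{Eq:Obj}) objective additively and separably, so the subproblem of choosing a best $(b_H,b_L)$ given $w_H^*(a),w_L^*(a),a$ decouples into two independent one-variable problems.

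Concretely, with $a$ and the $w_i^*(a)$ held fixed, the objective equals a constant plus $a\bigl(b_H-\gamma\eta(b_H)\bigr)+(1-a)\bigl(b_L-\gamma\eta(b_L)\bigr)$, so a best theft choice maximizes $g(b):=b-\gamma\eta(b)$ over $b\in[0,w_i^*(a)]$ for each $i$ (weighted by the nonnegative coefficient $a$, resp.\ $1-a$). Since $\eta$ is strictly convex and twice differentiable, $g'(b)=1-\gamma\eta'(b)$ is strictly decreasing, so $g$ is strictly concave, and by Definition \ref{def:Ideal} its unique unconstrained maximizer is $\beta$ (where $\gamma\eta'(\beta)=1$). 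A short case split then gives the maximizer on $[0,w_i^*(a)]$: if $\beta\le w_i^*(a)$ the interior stationary point $\beta$ is feasible and optimal; if $\beta> w_i^*(a)$ then $g'>0$ on all of $[0,w_i^*(a)]$, so $g$ is strictly increasing there and the maximizer is the endpoint $w_i^*(a)$. In both cases the maximizer is $\min\{\beta,w_i^*(a)\}$, and it is the \emph{unique} maximizer of $g$ on that interval --- by strict concavity in the first case, by strict monotonicity in the second.

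Finally I would translate back to the original claim. For $i=L$ the weight $1-a$ is strictly positive for every $a\in[0,1)$, so $b_L^*(a)=\min\{\beta,w_L^*(a)\}$ is forced and unique. For $i=H$ the weight is $a$; when $a>0$ the same argument forces $b_H^*(a)=\min\{\beta,w_H^*(a)\}$, whereas when $a=0$ the objective does not depend on $b_H$ at all, so $\min\{\beta,w_H^*(0)\}$ remains a best choice but is no longer the only one --- this is precisely why the uniqueness clause is stated only for $a\in(0,1)$. Combining the two coordinates yields the proposition. The only genuinely delicate step will be handling this $a=0$ boundary case in the uniqueness claim; everything else reduces to the separability observation plus elementary strict-concavity arguments. (If one does not assume the $\beta$ of Definition \ref{def:Ideal} exists, i.e.\ $\gamma\eta'(0)\ge 1$, then $g$ is nonincreasing on $[0,\infty)$ and the best theft is $0$, which agrees with the stated formula once $\beta$ is read as $0$.)
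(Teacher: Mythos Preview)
Your proposal is correct and follows essentially the same approach as the paper: both exploit that $b_H,b_L$ appear only in the box constraints (\ref{Eq:theft}) and enter the objective separably, then optimize the strictly concave map $b\mapsto b-\gamma\eta(b)$ on $[0,w_i^*(a)]$, handling the degenerate weight ($a=0$ for $b_H$) separately to explain the uniqueness caveat. If anything, your write-up is a bit more complete --- you explicitly treat both branches of the $\min$ and argue uniqueness via strict concavity/strict monotonicity, whereas the paper's proof only explicitly rules out $b_H<\min\{\beta,w_H^*\}$; your closing parenthetical on $\gamma\eta'(0)\ge 1$ is also a sensible robustness remark not present in the paper.
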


The proof of Proposition \ref{prop:IdealTheft} appears in Appendix \ref{app:theft}.
The employer will try to withhold as much as $\beta$ from the promised wage. 
If $\beta\geq w_i^*$, then the employer withholds the whole wage, and the worker receives nothing. 
If $\beta<w_i^*$, then the employer steals the amount $\beta$, and the worker obtains $w_i^*-\beta$. 
In the latter case, the employer does not intend to withhold the whole wage due to the expected penalty cost. By altering $\beta$, Proposition \ref{prop:IdealTheft} supports our intuition that the inspector can reduce the wage theft by either increasing the frequency of inspections (i.e., larger $\gamma$) or by penalizing the wage theft more aggressively (i.e., a steeper $\eta(\cdot)$ function).
Further, Proposition \ref{prop:IdealTheft} immediately provides necessary and sufficient conditions to eliminate theft via imposed penalties. 

\begin{theorem}\label{thm:eliminateTheft}
    To eliminate wage theft, i.e., to force $\beta=0$, the inspection rate $\gamma$ and penalty function $\eta(\cdot)$ must satisfy  $\eta'(0)\geq 1/\gamma$.
\end{theorem}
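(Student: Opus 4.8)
The plan is to derive the statement as a short consequence of Definition~\ref{def:Ideal} together with the strict convexity of $\eta$, after first clarifying in what sense ``$\beta=0$'' is equivalent to the absence of wage theft. By Proposition~\ref{prop:IdealTheft}, any optimal theft amount satisfies $b_i^*(a)=\min\{\beta,w_i^*(a)\}$; hence $\beta=0$ forces $b_i^*(a)=0$ for every $i\in\{L,H\}$ and every induced effort level $a$, so no theft ever occurs, whereas $\beta>0$ produces strictly positive theft whenever the corresponding optimal wage $w_i^*(a)$ is positive. Thus it suffices to characterize exactly when $\beta=0$.

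Next I would record the first-order structure of the employer's theft decision. Holding a promised wage fixed, withholding an amount $b$ changes the employer's expected payoff by $g(b):=b-\gamma\eta(b)$, so $g'(b)=1-\gamma\eta'(b)$. Since $\eta$ is strictly convex and twice differentiable, $\eta'$ is continuous and strictly increasing, so $g$ is strictly concave on $[0,\infty)$. Consequently the principal's ideal wage theft --- the maximizer of $g$ over $b\ge 0$, equivalently the amount at which the marginal benefit of theft meets its marginal cost as in Definition~\ref{def:Ideal} --- equals $0$ exactly when $g'(0)\le 0$, and is strictly positive otherwise (being then either the interior critical point $\beta$ with $\gamma\eta'(\beta)=1$, or, if $\eta'$ never reaches $1/\gamma$, a value limited only by the promised wage).

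Putting the two steps together gives the claim: $\beta=0$ if and only if $g'(0)=1-\gamma\eta'(0)\le 0$, i.e.\ $\eta'(0)\ge 1/\gamma$. The one place that needs care is reading Definition~\ref{def:Ideal} when $\eta'(0)>1/\gamma$: there the equation $\eta'(\beta)=1/\gamma$ has no nonnegative solution, so instead of treating $\beta$ as undefined one must appeal to the underlying maximization of $g$ --- strict concavity together with the boundary optimality condition $g'(0)\le 0$ --- to conclude that the ideal theft is the boundary value $\beta=0$. Beyond this corner-case bookkeeping the argument is immediate, since Theorem~\ref{thm:eliminateTheft} is essentially just the interior-versus-boundary dichotomy for the concave function $g$.
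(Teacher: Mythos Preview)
Your proposal is correct and follows the same line as the paper: the paper presents Theorem~\ref{thm:eliminateTheft} as an immediate corollary of Proposition~\ref{prop:IdealTheft} and Definition~\ref{def:Ideal}, and your argument fleshes this out via the strict concavity of $g(b)=b-\gamma\eta(b)$. Your explicit handling of the boundary case $\eta'(0)>1/\gamma$ (where the defining equation $\gamma\eta'(\beta)=1$ has no nonnegative root and one must read $\beta$ as the corner maximizer of $g$) is a useful clarification that the paper leaves implicit.
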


While this result seems promising for eliminating wage theft, this requirement is unlikely to be realistic given current practices of inspection rates and penalties. According to \cite{Backpay}, most cases of wage theft go undetected  ($\gamma$ is close to zero) and the maximum penalty is capped at only twice the amount of theft (employers can be required to pay back wages plus an equal amount in liquidated damages). 
For a penalty function describing this setting where the penalty is bounded by twice the wage theft, it is straightforward to show that $\eta'(0)\leq 2$, and therefore the inspection rate would have to be at least $\gamma = 0.5$ to eliminate wage theft. 
In other words, when using only a penalty-based method, wage theft is likely to still exist even if the inspection rate is very high since current penalties tend to be low.
For the remainder of this section, we focus on partially characterizing the optimal solution in order to understand how wage theft will impact different types of workers. 

\subsection{Optimal Wages to Induce A Specific Effort Level}

In this subsection, we focus on determining the optimal wages, $w_H^*(\a)$ and $w_L^*(\a)$ needed to induce a specific effort level $\a$.

\begin{restatable}[Optimal Wages]{proposition}{Wages}
\label{prop:Wages}
An optimal set of promised wages to induce arbitrary (not necessarily optimal) effort level $\a$ is:
\begin{align*}
    w_L^*(a)&=\max\{0, u-a\cdot \c(a)+\C(a)\} \tag{Optimal Low Wage to Induce Effort $\a$}\\
    w_H^*(a)&=w_L^*(a)+\c(a)=\max\{\c(\a), u+(1-a)\cdot \c(a)+\C(a)\}
    \tag{Optimal High Wage to Induce Effort $\a$}\\
\end{align*}
\vspace{-.6in}

\noindent Further, if $\a\in (0,1)$ then $w_L^*(\a)$ and $w_H^*(a)$ are the unique promised wages to induce $\a$. 
\end{restatable}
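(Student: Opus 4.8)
The plan is to treat this as a two-stage problem: among all tuples $(w_H,w_L,b_H,b_L)$ that are feasible and induce exactly the effort $a$, I would characterize the one maximizing the employer's profit. Fix $a\in(0,1)$ first; the case $a=0$ is degenerate and is handled at the end. The worker's inner objective $w_L+a'(w_H-w_L)-\C(a')$ is strictly concave in $a'$ because $\C$ is strictly convex, and it tends to $-\infty$ as $a'\to 1$, so it has a unique maximizer over $[0,1)$. For that maximizer to be the interior point $a$, the first-order condition must hold, namely $w_H-w_L=\c(a)$, and conversely strict concavity makes this sufficient; hence (\ref{Eq:IC}) is equivalent to the identity $w_H=w_L+\c(a)$. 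Substituting it into (\ref{Eq:IR}) collapses that constraint to $w_L\ge u-a\,\c(a)+\C(a)$, and combined with $w_L\ge 0$ this is exactly $w_L\ge w_L^*(a)$; moreover $w_H=w_L+\c(a)\ge\c(a)\ge 0$ is then automatic, so the only binding restriction on the pair is $w_L\ge w_L^*(a)$.

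Next I would reduce the employer's profit to a function of $w_L$ alone. By the argument of Proposition \ref{prop:IdealTheft} --- which applies to any fixed promised wages, not only optimal ones, since the theft stage is a self-contained subproblem --- the employer's best theft out of a promised wage $w$ is $\min\{\beta,w\}$, so the employer's net cost for an outcome with promised wage $w$ is $\phi(w):=w-\min\{\beta,w\}+\P(\min\{\beta,w\})$, equal to $\P(w)$ for $w\le\beta$ and to $w-\beta+\P(\beta)$ for $w\ge\beta$. Since $\eta$ is increasing and $\gamma>0$, each piece is strictly increasing, and they agree at $w=\beta$, so $\phi$ is strictly increasing on $[0,\infty)$. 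Thus the (\ref{Eq:Obj}) value equals $a\big(\H-\phi(w_H)\big)+(1-a)\big(\L-\phi(w_L)\big)$, which under $w_H=w_L+\c(a)$ is strictly decreasing in $w_L$ because both weights $a$ and $1-a$ are positive. It is therefore maximized at the smallest feasible value $w_L=w_L^*(a)$, forcing $w_H=w_L^*(a)+\c(a)=w_H^*(a)$; the rewriting $w_H^*(a)=\max\{\c(a),\,u+(1-a)\c(a)+\C(a)\}$ follows by distributing the added $\c(a)$ into the maximum defining $w_L^*(a)$.

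For uniqueness when $a\in(0,1)$ I would retrace the chain: (\ref{Eq:IC}) makes $w_H$ a function of $w_L$, strict monotonicity of the profit in $w_L$ singles out $w_L=w_L^*(a)$, and the uniqueness clause of Proposition \ref{prop:IdealTheft} fixes the theft amounts, so the optimal tuple is unique. For $a=0$, (\ref{Eq:IC}) only requires the inner objective's derivative at $0$ to be nonpositive, i.e.\ $w_H-w_L\le\c(0)$; (\ref{Eq:IR}) becomes $w_L\ge u$; and the profit reduces to $\L-\phi(w_L)$ since the high-outcome weight vanishes, so $w_L^*(0)=\max\{0,u\}$ is optimal while $w_H^*(0)=w_L^*(0)+\c(0)$ is merely one admissible choice --- which is why the statement restricts uniqueness to $a\in(0,1)$.

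I expect the one nontrivial point to be verifying that $\phi$ is strictly increasing across the kink at $w=\beta$, together with making explicit that Proposition \ref{prop:IdealTheft}'s theft reduction may legitimately be invoked at candidate wages not yet known to be optimal while sweeping $w_L$. Once these are settled, the remainder is routine substitution organized around recognizing (\ref{Eq:IC}) for interior $a$ as the knife-edge identity $w_H-w_L=\c(a)$.
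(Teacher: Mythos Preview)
Your proof is correct and follows the same two-stage reduction as the paper: first use (\ref{Eq:IC}) to pin down $w_H=w_L+\c(a)$, then show the employer's objective is monotone decreasing in $w_L$ so that $w_L$ is driven to its lower feasibility bound $\max\{0,u-a\,\c(a)+\C(a)\}$. The only cosmetic difference is that you package the monotonicity step via the single net-cost function $\phi(w)=w-\min\{\beta,w\}+\P(\min\{\beta,w\})$ and its strict increase, whereas the paper verifies $\partial h/\partial w_L\le 0$ by an explicit three-case split at $w_L=\beta-\c(a)$ and $w_L=\beta$; the two arguments are equivalent.
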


The proof of Proposition \ref{prop:Wages} appears in Appendix \ref{app:wages}.
Combining Propositions \ref{prop:IdealTheft} and \ref{prop:Wages}, an optimal set of wages and wage thefts to induce an arbitrary effort level $a$ can be expressed as follows, which reduces the principal-agent model to a single decision variable.
\begin{align*}
    {w_L^*(\a)}&=\max\{0, u-\a\cdot \c (\a)+\C (\a)\} \tag{low wage}
    \\
    w_H^*(\a)&=w_L^*(\a)+\c (\a)=\max\{\c(\a), u + (1-\a)\cdot \c(\a) + \C(\a)\}\tag{high wage}\\
    b_L^*(\a)&=\min\{ \beta, w_L^*(\a))=\min\{ \beta, \max\{0, u-\a\cdot \c (\a)+\C (\a)\}\} \tag{low theft}\\
    b_H^*(\a)&=\min\{ \beta, w_H^*(\a))=\min\{ \beta, \max\{0, u-\a\cdot \c (\a)+\C (\a)\}+\c (\a)\} \tag{high theft}
    \end{align*}

Based on these results, we see that there are several key factors playing a role on the wage theft; not only does the penalty function play a significant role, both the reservation utility $u$ and worker cost $\C$ impact the realized wage theft. 
To better understand the dynamics of wage theft, we present an example below.

\subsubsection*{An Illustrative Example}
We demonstrate the results in Propositions \ref{prop:IdealTheft} and \ref{prop:Wages} on an example where the worker's cost function $\C(\a)=\a / (1-\a)$ and outside option $u=1$ in Figures \ref{fig:wHwL} and \ref{fig:4Cases}. 
Figure \ref{fig:wHwL} presents the wages in the verbal contract, and Figure \ref{fig:4Cases} presents the corresponding wage theft amounts in detail.
In Figure \ref{fig:wHwL}, we see that in order to induce higher effort, the offered high wage needs to increase while the low wage offered generally decreases.
This is intuitive as the worker's marginal revenue for additional effort is precisely the difference between the high and low wages. 
Moreover, it is also straightforward to show theoretically since $w_H^*(a)$ and $w_L^*(a)$ are increasing and non-decreasing functions of $a$ respectively. 
As shown in the figure, this means there exists a discontinuity in $w_H^*(a)$ and $w_L^*(a)$ for $a$ where $u=\a \C^\prime (\a) + \C(\a)$. 
Below this threshold represents situations where the probability of a low outcome is high enough that the employer must promise positive low wages as insurance to ensure the expected outcome for a worker exceeds their reservation utility.

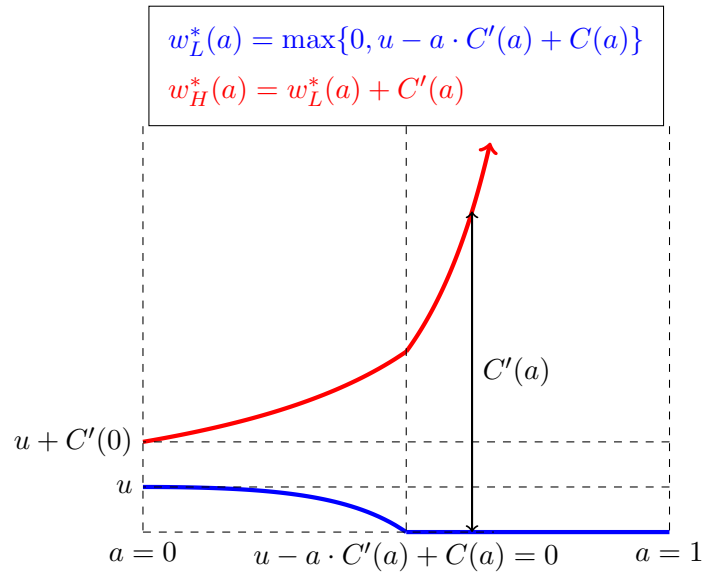
\begin{figure}[ht]
    \centering
    \begin{tikzpicture}[xscale=7,yscale=0.6]
    
      \draw[domain=0:0.5, smooth, ultra thick, variable=\x, blue] plot ({\x}, {1- \x*\x/(1-\x)/(1-\x)});
      \draw[domain=0.5:1, smooth, ultra thick, variable=\x, blue] plot ({\x}, 0);
      
      \draw[domain=0:0.5, smooth, ultra thick, variable=\x, red] plot ({\x}, {1+ (1-\x*\x)/(1-\x)/(1-\x)});
      \draw[domain=0.5:.66, smooth, ultra thick, variable=\x, red,->] plot ({\x}, {1/(1-\x)/(1-\x)});
      
      
        \draw[dashed] (0.5,9)--(0.5,0) node[below] {$u-\a\cdot \c (\a) + \C (\a)=0$};
        \draw[dashed] (1,9)--(1,0) node[below] {$\a=1$};
        \draw[dashed] (0,9)--(0,0) node[below] {$\a=0$};
      
        \draw[dashed] (0,0)--(2/3,0);
        \draw[dashed] (1,1)--(0,1) node[left] {$u$};
        \draw[dashed] (1,2)--(0,2) node[left] {$u+ \c(0)$};
      
        \draw[thick, <->] (0.625,0) -- (0.625,7.111) node[midway, right] {$\c (\a)$};

      

    \matrix [draw, above ] at (0.5,9) {
        \node [blue,right] {$w_L^*(\a)= \max \{ 0, u- \a \cdot \c (\a) + \C (\a)\}$}; \\
        \node [red,right] {$w_H^*(\a)=w_L^*(\a) + \c (\a)$}; \\
    };
      
    \end{tikzpicture}
    \caption{Optimal values for $w_H$ and $w_L$ as functions of effort $\a$ (Proposition \ref{prop:Wages}) for cost function $\C(\a)=\a/(1-\a)$ and outside option $u=1$.} \label{fig:wHwL}
\end{figure}

Figure \ref{fig:4Cases} depicts the optimal wage theft amounts alongside the corresponding optimal wages to induce a specific effort level. 
The impact of wage theft on workers varies based on the severity of wage theft penalties (by the amount of the ideal theft $\beta$). 
When the penalties for wage theft are sufficiently large, e.g., either the inspection is frequent or the penalty is large, then the ideal theft falls in the first region where $\beta \in (0, u]$ (Figure \ref{subfig:4a}). 
In this case, for all effort levels, workers with high output will be subject to the same wage theft amount with positive net pay since the promised wages exceed the ideal theft $\beta$.
However, when low output occurs, worker's will receive an \textit{effective wage} (i.e., verbal wage minus the wage theft, $w_i-b_i$ for $i\in\{H, L\}$) of zero if the effort level is sufficiently high, even if they were promised positive wages.
At the other extreme, when the penalty for theft is sufficiently small, i.e., $\beta \geq u$, the wage theft will always exceed the promised low wages resulting in no payment when a low outcome is observed regardless of whether wages were promised as depicted in Figures \ref{subfig:4b} and \ref{subfig:4c}.

\definecolor{MyColor}{RGB}{218,170,32}

\begin{figure}[!ht]
    \centering
    \begin{subfigure}[t]{.32\textwidth}
    \begin{tikzpicture}[xscale=5,yscale=0.4]

      \draw[domain=0:0.5, smooth, ultra thick, variable=\x, blue] plot ({\x}, {1- \x*\x/(1-\x)/(1-\x)});
      \draw[domain=0.5:1, smooth, ultra thick, variable=\x, blue] plot ({\x}, 0);

      \draw[domain=0.414:0.5, smooth, line width=1.01mm, Magenta, dashed, variable=\x] plot ({\x}, {1- \x*\x/(1-\x)/(1-\x)});
      \draw[line width=1.01mm, Magenta] (0,0.5)--(.414,0.5);
      \draw[line width=1.01mm, Magenta, loosely dashed] (0.5,0)--(1,0);

      \draw[domain=0:0.5, smooth, ultra thick, variable=\x, red] plot ({\x}, {1+ (1-\x*\x)/(1-\x)/(1-\x)});
      \draw[domain=0.5:.66, smooth, ultra thick, variable=\x, red,->] plot ({\x}, {1/(1-\x)/(1-\x)});

      \draw[line width=1.01mm, MyColor, dashed] (0,0.5)--(1,0.5);
      
      
        \draw[dashed] (1,9)--(1,0) node[below left] {$\a=1$};
        \draw[dashed] (0,9)--(0,0) node[below right] {$\a=0$};
        \draw[dotted] (0.5,9)--(0.5,0);
        \draw[dotted] (0.414,9)--(0.414,0);
      
        \draw[dashed] (0,0)--(2/3,0);
        \draw[black, line width=.2mm] (1,.5)node[above left] {$\beta : \P'(\beta)=1$} --(0,.5) ;

      \fill[white] (0.0125, 9.3)--(0.0125, 6.9)--(0.595,6.9)--(0.595,9.3)--cycle;
      \draw[thick] (0.0125, 9.3)--(0.0125, 6.9)--(0.595,6.9)--(0.595,9.3)--cycle;
      
      \draw[thick, blue] (0.025,8.7)--(0.05,8.7) node[right, blue] {$w_L^*(\a)$};
      \draw[thick, red] (0.025,7.5)--(0.05,7.5) node[right, red] {$w_H^*(\a)$};
      
      \draw[line width=1.01mm, Magenta, dashed] (0.3,8.7)--(0.35,8.7) node[right, Magenta] {$b_L^*(\a)$};
      \draw[line width=1.01mm, MyColor, dashed] (0.3,7.5)--(0.35,7.5) node[right, MyColor] {$b_H^*(\a)$};
      
    \end{tikzpicture}
    \caption{$\beta \in (0,u]$}\label{subfig:4a}
    \end{subfigure}
    \hfill
    \begin{subfigure}[t]{.32\textwidth}
    \begin{tikzpicture}[xscale=5,yscale=0.4]

      \draw[domain=0:0.5, smooth, ultra thick, variable=\x, blue] plot ({\x}, {1- \x*\x/(1-\x)/(1-\x)});
      \draw[domain=0.5:1, smooth, ultra thick, variable=\x, blue] plot ({\x}, 0);

      \draw[domain=0:0.5, smooth, line width=1.01mm, Magenta, dashed, variable=\x] plot ({\x}, {1- \x*\x/(1-\x)/(1-\x)});
       \draw[line width=1.01mm, Magenta, loosely dashed] (0.5,0)--(1,0);

      \draw[domain=0:0.5, smooth, ultra thick, variable=\x, red] plot ({\x}, {1+ (1-\x*\x)/(1-\x)/(1-\x)});
      \draw[domain=0.5:.66, smooth, ultra thick, variable=\x, red,->] plot ({\x}, {1/(1-\x)/(1-\x)});

      \draw[line width=1.01mm, MyColor, dashed] (0,1.5)--(1,1.5);
      
      
        \draw[dashed] (1,9)--(1,0) node[below left] {$\a=1$};
        \draw[dashed] (0,9)--(0,0) node[below right] {$\a=0$};
        \draw[dotted] (0.5,9)--(0.5,0);
      
        \draw[dashed] (0,0)--(2/3,0);
        \draw[black, line width=.2mm] (1,1.5)node[above left] {$\beta : \P'(\beta)=1$} --(0,1.5) ;

      \fill[white] (0.0125, 9.3)--(0.0125, 6.9)--(0.595,6.9)--(0.595,9.3)--cycle;
      \draw[thick] (0.0125, 9.3)--(0.0125, 6.9)--(0.595,6.9)--(0.595,9.3)--cycle;
      
      \draw[thick, blue] (0.025,8.7)--(0.05,8.7) node[right, blue] {$w_L^*(\a)$};
      \draw[thick, red] (0.025,7.5)--(0.05,7.5) node[right, red] {$w_H^*(\a)$};
      
      \draw[line width=1.01mm, Magenta, dashed] (0.3,8.7)--(0.35,8.7) node[right, Magenta] {$b_L^*(\a)$};
      \draw[line width=1.01mm, MyColor, dashed] (0.3,7.5)--(0.35,7.5) node[right, MyColor] {$b_H^*(\a)$};
      
    \end{tikzpicture}
    \caption{$\beta \in [u,u+\c(0)]$}\label{subfig:4b}
    \end{subfigure}
    \hfill
    \begin{subfigure}[t]{.32\textwidth}
    \begin{tikzpicture}[xscale=5,yscale=0.4]

      \draw[domain=0:0.5, smooth, ultra thick, variable=\x, blue] plot ({\x}, {1- \x*\x/(1-\x)/(1-\x)});
      \draw[domain=0.5:1, smooth, ultra thick, variable=\x, blue] plot ({\x}, 0);

      \draw[domain=0:0.5, smooth, line width=1.01mm, Magenta, dashed, variable=\x] plot ({\x}, {1- \x*\x/(1-\x)/(1-\x)});
      \draw[line width=1.01mm, Magenta, loosely dashed] (0.5,0)--(1,0);

      \draw[domain=0:0.5, smooth, ultra thick, variable=\x, red] plot ({\x}, {1+ (1-\x*\x)/(1-\x)/(1-\x)});
      \draw[domain=0.5:.66, smooth, ultra thick, variable=\x, red,->] plot ({\x}, {1/(1-\x)/(1-\x)});

      \draw[domain=0:0.333, smooth,line width=1.01mm, MyColor, dashed,variable=\x] plot ({\x}, {1+ (1-\x*\x)/(1-\x)/(1-\x)});
      \draw[line width=1.01mm, MyColor, dashed] (0.333,3)--(1,3);
      
      
        \draw[dashed] (1,9)--(1,0) node[below left] {$\a=1$};
        \draw[dashed] (0,9)--(0,0) node[below right] {$\a=0$};
        \draw[dotted] (0.5,9)--(0.5,0);
        \draw[dotted] (0.333,9)--(0.333,0);
      
        \draw[dashed] (0,0)--(2/3,0);
        \draw[black, line width=.2mm] (1,3)node[below left] {$\beta : \P'(\beta)=1$} --(0,3) ;

      \fill[white] (0.0125, 9.3)--(0.0125, 6.9)--(0.595,6.9)--(0.595,9.3)--cycle;
      \draw[thick] (0.0125, 9.3)--(0.0125, 6.9)--(0.595,6.9)--(0.595,9.3)--cycle;
      
      \draw[thick, blue] (0.025,8.7)--(0.05,8.7) node[right, blue] {$w_L^*(\a)$};
      \draw[thick, red] (0.025,7.5)--(0.05,7.5) node[right, red] {$w_H^*(\a)$};
      
      \draw[line width=1.01mm, Magenta, dashed] (0.3,8.7)--(0.35,8.7) node[right, Magenta] {$b_L^*(\a)$};
      \draw[line width=1.01mm, MyColor, dashed] (0.3,7.5)--(0.35,7.5) node[right, MyColor] {$b_H^*(\a)$};

    \end{tikzpicture}
    \caption{$\beta \in [u+\c(0), \infty$)}\label{subfig:4c}
    \end{subfigure}       
    \caption{Optimal wage theft amounts $b_H$ and $b_L$ as functions of effort $\a$ and ideal theft $\beta$ for cost function $\C(\a)=\a/(1-\a)$ and outside option $u=1$. The ideal wage theft amount $\beta$ increases from (a) to (c), implying that the wage theft becomes more costly for the employer. The vertical lines indicate places where the first derivative is discontinuous.} \label{fig:4Cases}
\end{figure}
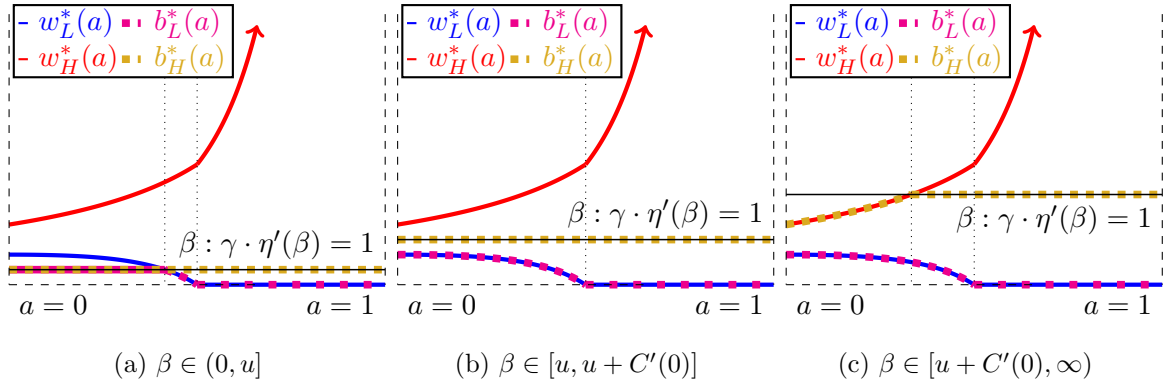


\subsection{On the Optimal Effort Level}\label{sec:OptEffort}

Propositions \ref{prop:IdealTheft} and \ref{prop:Wages} show that all $w_i^*$ and $b_i^*$ values are functions of the worker's effort level $\a$, $i \in \{L,H\}$. This means that the original optimization problem can be represented with the one-dimensional search problem below:
\begin{align*}
    \max_{\a\in [0,1)} g(\a) \tag{One-dimensional search problem}\label{eqn:1DSearch}
\end{align*}
\begin{align*}
    g(\a):= &\ \a\cdot \left( \H - w_H^*(\a) + b_H^*(\a) - \P(b_H^*(\a))\right)\\ &+ (1-\a)\cdot \left( \L - w_L^*(\a) + b_L^*(\a) - \P(b_L^*(\a))\right),
\end{align*}
where $w_i^*(\a)$ and $b_i^*(\a)$ for $i\in \{H,L\}$ are given in Propositions \ref{prop:IdealTheft} and \ref{prop:Wages}.

The one-dimensional search problem is defined in an open space $[0,1)$, and it is not immediately clear that an optimal solution exists. 
Proposition \ref{prop:existence} proves the existence of an optimal solution for this problem. 

\begin{restatable}{proposition}{Existence}\label{prop:existence}
    An optimal solution to the (\ref{eqn:1DSearch}) exists with $\a^*\in [0,1)$. 
\end{restatable}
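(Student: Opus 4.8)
The plan is to show that $g$ extends to a continuous function on the compact interval $[0,1]$ by proving $\lim_{a\to 1^-} g(a) = -\infty$ (or at least that $g$ is bounded above and attains its supremum before reaching $1$), so that a maximizer exists in $[0,1)$. First I would observe that on $(0,1)$ each of $w_i^*(a)$ and $b_i^*(a)$ is given by the explicit closed forms from Propositions \ref{prop:IdealTheft} and \ref{prop:Wages}; in particular $w_H^*(a) \geq \c(a)$, and by the standing assumption on $\C$ we have $\c(a) \to \infty$ as $a \to 1$. Since $b_H^*(a) = \min\{\beta, w_H^*(a)\} \leq \beta$ is bounded and $\beta - \P(b_H^*(a))$ is therefore bounded, the term $\H - w_H^*(a) + b_H^*(a) - \P(b_H^*(a))$ tends to $-\infty$ as $a \to 1$. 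The low-output bracket $\L - w_L^*(a) + b_L^*(a) - \P(b_L^*(a))$ is bounded above (indeed $b_L^*(a) - \P(b_L^*(a)) \leq \beta$ and $-w_L^*(a) \leq 0$, while $w_L^*(a)$ is bounded on compacta), and its coefficient $1-a \to 0$. Hence $g(a) = a \cdot (\text{something} \to -\infty) + (1-a)\cdot(\text{bounded above})$, which shows $g(a) \to -\infty$.

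Next I would make this rigorous and combine it with continuity: $g$ is continuous on $[0,1)$ because $w_i^*, b_i^*$ are continuous there (they are maxima/minima of continuous functions, using that $\C, \c, \eta$ are continuous), so $g$ is continuous on any $[0, 1-\epsilon]$. Pick any reference point, say evaluate $g(0) = \L - w_L^*(0) + b_L^*(0) - \P(b_L^*(0))$, which is a finite number $M_0$; since $g(a) \to -\infty$, there exists $\delta \in (0,1)$ with $g(a) < M_0$ for all $a \in (1-\delta, 1)$. Therefore $\sup_{a\in[0,1)} g(a) = \sup_{a \in [0, 1-\delta]} g(a)$, and the latter supremum is attained by the extreme value theorem on the compact set $[0,1-\delta]$. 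The attaining point $a^*$ lies in $[0,1-\delta] \subseteq [0,1)$, which is the claim.

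The main obstacle — really the only delicate point — is verifying the behavior of the pieces near $a = 1$ carefully, since $w_L^*$ and $w_H^*$ are defined via a $\max\{0, \cdot\}$ and the inner expression $u - a\c(a) + \C(a)$ could a priori be unbounded in either direction. Here I would note that $w_L^*(a) \geq 0$ always, so $-w_L^*(a) \leq 0$, which is all that is needed for the low-output bracket to stay bounded above; and $w_H^*(a) = w_L^*(a) + \c(a) \geq \c(a) \to \infty$ gives the decisive divergence in the high-output bracket. One should also confirm $\C(a) \to \infty$ is consistent — it is, by hypothesis — and that $\P(b) = \gamma \cdot \eta(b) \geq 0$ so that $-\P(b_i^*(a)) \leq 0$, keeping the "bounded above" claims honest. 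A brief remark handling the degenerate possibility $\gamma = 0$ (then $\P \equiv 0$, $\beta = \infty$, $b_i^* = w_i^*$, and $g(a) = a(\H - 0) + (1-a)(\L - 0) \to \H$, still bounded, with maximizer at $a$ near $1$ but strictly less than $1$, attained on a compact sub-interval once we truncate) may be worth a sentence, though the paper's convexity-plus-$\eta(0)=0$ setup with $\eta'(0)\geq 1/\gamma$ discussion implicitly assumes $\gamma > 0$, so I would simply assume $\gamma > 0$, $\beta < \infty$ unless the authors want the edge case spelled out.
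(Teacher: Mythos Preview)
Your argument is correct and reaches the same endpoint as the paper---restrict to a compact subinterval and apply the extreme value theorem---but via a more direct route. The paper works harder: it first shows that for $a$ close enough to $1$ one has $w_L^*(a)=b_L^*(a)=0$ and $b_H^*(a)=\beta$ (using an integral estimate $a\c(a)-\C(a)\ge \tfrac12\c(a)-\C(\tfrac12)\to\infty$), and only then computes $g'(a)$ explicitly in that regime to conclude $g'(a)\to-\infty$, hence $g$ is eventually strictly decreasing. You instead bound $g(a)$ itself: $w_H^*(a)\ge \c(a)\to\infty$ forces the high-output bracket to $-\infty$, while the low-output bracket stays bounded above because $-w_L^*(a)\le 0$ and $b_L^*(a)\le\beta$. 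This sidesteps the derivative computation entirely and never requires locating the break points of $w_L^*$ or $b_i^*$. The paper's route yields the slightly stronger conclusion that $g$ is strictly decreasing near $1$, but that extra information is not used elsewhere; your approach is cleaner for the bare existence claim. One small correction: your parenthetical on the $\gamma=0$ edge case is off---there $g(a)=a\cdot\H+(1-a)\cdot\L$ is strictly increasing on $[0,1)$ and the supremum $\H$ is \emph{not} attained, so existence genuinely fails---but you correctly note that the paper's setup (Definition~\ref{def:Ideal} and the surrounding discussion) implicitly takes $\gamma>0$ and $\beta<\infty$, so this does not affect the main argument.
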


The proof of Proposition \ref{prop:existence} appears in Appendix \ref{app:exist}.
Proposition \ref{prop:existence} alone does not exclude the possibility that the agent exerts no effort, which is considered necessary in the labor economics literature. 
The next proposition presents a necessary and sufficient condition for $a^* > 0$, which states that the worker will exert positive effort as long as the potential production gain exceeds the worker's marginal cost of effort at $\a=0$. 
The condition is relatively natural; it simply states the marginal gains to yield a high outcome must exceed the marginal cost for a worker to exert effort. 

\begin{restatable}{proposition}{Positive}\label{prop:positive}
    A sufficient condition to ensure $g'(0)>0$ (thereby ensuring $a^*>0$) is $\c(0)<\H-\L$.  Further, this condition is necessary when $0\leq \beta\leq u$.
\end{restatable}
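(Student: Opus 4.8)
The plan is to compute the one-sided derivative $g'(0)$ in closed form. First I would pin down which branch of each nested $\max$/$\min$ in $w_L^*,w_H^*,b_L^*,b_H^*$ is active on a right-neighborhood of $0$. By Proposition~\ref{prop:Wages}, when $u>0$ we have near $a=0$ that $w_L^*(a)=u-a\c(a)+\C(a)>0$ and $w_H^*(a)=w_L^*(a)+\c(a)=u+(1-a)\c(a)+\C(a)$, so $w_L^*(0)=u$ and $w_H^*(0)=u+\c(0)$; moreover $w_L^*$ is nonincreasing with $\tfrac{d}{da}w_L^*(a)\big|_{a=0}=0$, while $w_H^*$ is increasing near $0$. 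Combining this with Proposition~\ref{prop:IdealTheft}, the behavior of $b_L^*(a)=\min\{\beta,w_L^*(a)\}$ and $b_H^*(a)=\min\{\beta,w_H^*(a)\}$ near $0$ is governed by comparing the ideal theft $\beta$ with the thresholds $u$ and $u+\c(0)$, yielding three cases: (I) $0\le\beta<u$, with $b_L^*(a)=b_H^*(a)=\beta$; (II) $u\le\beta\le u+\c(0)$, with $b_L^*(a)=w_L^*(a)$ and $b_H^*(a)=\beta$; (III) $\beta>u+\c(0)$, with $b_L^*(a)=w_L^*(a)$ and $b_H^*(a)=w_H^*(a)$. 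The degenerate case $u=0$ forces $w_L^*\equiv0$ and leads to the same formulas below; in each case $g$ coincides on a right-neighborhood of $0$ with an explicit differentiable function, so $g'(0)$ is well defined.

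Next I would evaluate $g'(0)$ in each case. In (I), the identity $a\,w_H^*(a)+(1-a)\,w_L^*(a)=w_L^*(a)+a\c(a)=u+\C(a)$ reduces $g$ near $0$ to $g(a)=a\cdot\H+(1-a)\cdot\L-u-\C(a)+\beta-\P(\beta)$, so $g'(0)=\H-\L-\c(0)$. In (II), from $g(a)=a\bigl(\H-w_H^*(a)+\beta-\P(\beta)\bigr)+(1-a)\bigl(\L-\P(w_L^*(a))\bigr)$ a product-rule computation --- in which the $\P(w_L^*(a))$ term contributes only through its value because $\tfrac{d}{da}\P(w_L^*(a))\big|_{a=0}=\P'(u)\cdot\tfrac{d}{da}w_L^*(a)\big|_{a=0}=0$ --- gives $g'(0)=\H-\L-\c(0)+h(\beta)-h(u)$, where $h(x):=x-\P(x)$. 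In (III), the analogous computation from $g(a)=a\bigl(\H-\P(w_H^*(a))\bigr)+(1-a)\bigl(\L-\P(w_L^*(a))\bigr)$ gives $g'(0)=\H-\L-\bigl(\P(u+\c(0))-\P(u)\bigr)$.

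The sufficiency direction then rests on one structural fact: $h$ is strictly concave (since $h''(x)=-\P''(x)<0$ by strict convexity of $\eta$) and $h'(\beta)=1-\P'(\beta)=0$ by Definition~\ref{def:Ideal}, so $\beta=\arg\max_{x\ge0}h(x)$. Assume $\c(0)<\H-\L$. In case (I) this is exactly $g'(0)>0$. In case (II), $h(\beta)\ge h(u)$ gives $g'(0)\ge\H-\L-\c(0)>0$. In case (III), since $\P'$ is strictly increasing and $[u,u+\c(0)]\subseteq[0,\beta)$, we have $\P'(x)<\P'(\beta)=1$ there, so $\P(u+\c(0))-\P(u)=\int_u^{u+\c(0)}\P'(x)\,dx<\c(0)$, hence $g'(0)>\H-\L-\c(0)>0$. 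For necessity when $0\le\beta\le u$: this is case (I) (when $\beta<u$) or the $\beta=u$ instance of case (II), in which $h(\beta)-h(u)=0$; either way $g'(0)=\H-\L-\c(0)$ exactly, so $g'(0)>0$ forces $\c(0)<\H-\L$. Finally, the parenthetical ``$a^*>0$'' is immediate: Proposition~\ref{prop:existence} provides a maximizer $a^*\in[0,1)$, and $g'(0)>0$ makes $g(a)>g(0)$ for all small $a>0$, ruling out $a^*=0$.

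I expect the main obstacle to be careful bookkeeping rather than any single hard step: correctly identifying the active branch of the nested $\min$/$\max$ defining $b_L^*$ and $b_H^*$ on a right-neighborhood of $0$ (including the boundary thresholds $\beta=u$ and $\beta=u+\c(0)$ and the $u=0$ degeneracy), and pushing the product-rule differentiations through cleanly. Once the observation $\tfrac{d}{da}w_L^*(a)\big|_{a=0}=0$ neutralizes the $\P(w_L^*(a))$ terms, everything collapses to the single fact that $h(x)=x-\P(x)$ is strictly concave with maximizer exactly at the ideal theft $\beta$.
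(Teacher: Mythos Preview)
Your proposal is correct and follows essentially the same approach as the paper: both compute $g'(0)$ by identifying the active branches of $w_L^*,w_H^*,b_L^*,b_H^*$ near $a=0$, exploit $\tfrac{d}{da}w_L^*(a)\big|_{a=0}=0$ to kill the chain-rule terms, and bound the residual via the monotonicity of $h(x)=x-\P(x)$ on $[0,\beta]$. The only organizational difference is that you split explicitly into three cases according to where $\beta$ falls relative to $u$ and $u+\c(0)$, whereas the paper handles all cases at once by writing $g'(0)=\H-\L-\c(0)+\bigl[h(b_H^*(0))-h(b_L^*(0))\bigr]$ and noting $b_L^*(0)\le b_H^*(0)\le\beta$; the mathematical content is identical.
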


The proof of Proposition \ref{prop:positive} appears in Appendix \ref{app:positive}.
Notably, Propositions \ref{prop:existence} and \ref{prop:positive} imply $a^*\in (0,1)$ thereby strengthening Propositions \ref{prop:IdealTheft} and \ref{prop:Wages}.

\begin{corollary}\label{cor:uniqueness}
    Suppose $\c(0)<\H-\L$ and $a^*$ is an optimal-effort level.  
    Then $b_i^*(a^*)$ and $w_i^*(a^*)$ for $i\in \{L,H\}$ as given in Propositions \ref{prop:IdealTheft} and \ref{prop:Wages} are the unique optimal wage theft and wages respectively. 
\end{corollary}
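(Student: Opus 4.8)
The plan is to obtain this as a short corollary of Propositions \ref{prop:IdealTheft}, \ref{prop:Wages}, \ref{prop:existence}, and \ref{prop:positive}, essentially by chasing the logical dependencies. The first step is to observe that the hypothesis $\c(0)<\H-\L$ is exactly the sufficient condition appearing in Proposition \ref{prop:positive}, so that proposition directly delivers $g'(0)>0$, and with it (as recorded in the remark immediately preceding the corollary) that any optimal effort level is strictly positive, $a^*>0$. Next I would combine this with Proposition \ref{prop:existence}, which guarantees that a maximizer of \eqref{eqn:1DSearch} exists in $[0,1)$; since $a^*>0$, every such maximizer lies in the open interval $(0,1)$.

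With $a^*\in(0,1)$ established, the uniqueness assertions follow from the ``furthermore'' clauses already proved. Proposition \ref{prop:Wages} states that for $a^*\in(0,1)$ the wages $w_L^*(a^*)$ and $w_H^*(a^*)$ are the \emph{unique} promised wages inducing $a^*$, and Proposition \ref{prop:IdealTheft} states that, given those wages, $b_i^*(a^*)=\min\{\beta,w_i^*(a^*)\}$ is the \emph{unique} best wage theft for each $i\in\{L,H\}$. Because any optimal solution of \eqref{Eq:Obj} with worker effort $a^*$ must employ wages that induce $a^*$ (by \eqref{Eq:IC}) and a theft profile that is a best response to those wages, concatenating the two uniqueness statements shows that $(w_H^*(a^*),w_L^*(a^*),b_H^*(a^*),b_L^*(a^*))$ is the unique optimal wage-and-theft profile consistent with $a^*$.

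I do not expect a substantive obstacle: the corollary is genuinely a repackaging of earlier results. The one point that needs a sentence of care is the passage from $g'(0)>0$ to ``$a^*>0$'': this relies on $g$ being right-differentiable at $0$ with right derivative $g'(0)$, which in turn needs the outer $\max$ in $w_L^*$ and the $\min$'s in $b_i^*$ to stay on a single branch for $a$ near $0$ (true under the standing smoothness and boundary assumptions on $\C$ and $\eta$, with $u\ge 0$). That verification is already embedded in the proof of Proposition \ref{prop:positive}, so here it can simply be cited rather than reproved.
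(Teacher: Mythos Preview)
Your proposal is correct and follows essentially the same approach as the paper: the paper simply observes that Propositions \ref{prop:existence} and \ref{prop:positive} together force $a^*\in(0,1)$, which then activates the uniqueness clauses in Propositions \ref{prop:IdealTheft} and \ref{prop:Wages}. Your write-up is somewhat more detailed than the paper's one-line justification, but the logical structure is identical.
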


Finally, we remark that (\ref{eqn:1DSearch}) has a discontinuous first derivative, making it difficult to provide a complete characterization of the optimal solution. 
In the following section, we aim to understand the impact of wage theft on different types of workers by relying on a numerical analysis in conjunction with our partial characterization.

\section{Numerical Analysis}
\label{Sec:Numercial}

We have thus far analyzed the wage theft decisions and their impact on workers in a general theoretical context, without requiring specific functional forms for cost or penalty. In this section, we present numerical results using particular functional forms for the worker's cost of effort and the employer's penalty, adhering to the general characteristics of these functions. This allows us to offer concrete observations to complement our theoretical findings and provide further insights.

We found challenges in collecting wage theft related data since the incidences are not regularly reported (with the exception of certain data, such as the Current Population Survey on minimum wage violations, which represents just one type of wage theft), and the publicly available data is known to be quite limited \citep{CooperKroeger2017}. Therefore, we perform numerical analysis on general function forms with a set of parameter settings to obtain more comprehensive observations.
For the worker's cost function, we use $C(a)= \frac{ka}{(1-a)^q}$, where $k$ and $q$ are positive constants. Note that this function maintains all properties of the cost function as described in Section \ref{Sec:Model}.
We model the penalty function as $\eta(b)=\sigma b ^p$, where $\sigma$ is the penalty coefficient and $p$ is the growth factor, and they are both positive.
By changing these values, we can represent various penalty scenarios. 
 The parameter values used in the numerical study are presented in Table \ref{Tab:Param}.

\begin{table}[!ht]
\centering
\caption{Parameter values in the numerical study.}
\begin{tabular}{@{} ll @{}}    \toprule
\emph{Parameter} & \emph{Values}   \\\midrule
Price $P$    & 10, 15, 20, 30, 40   \\ 
Low output $y_L$ & 30, 40 \\
High output $y_H$ & 50\\
Penalty coefficient $\sigma$ & 0.25, 0.5, 0.75, 1, 1.25, 1.50, 2, 3, 4, 5 \\
Penalty growth factor $p$ & 1.1, 1.2, 1.3, 1.4, 1.5 \\
Inspection rate $\gamma$ & 0.1, 0.2, 0.3, 0.4, 0.5, 1 \\
Worker's cost coefficient $k$ & 0.1, 0.5, 1\\
Worker's cost growth factor  $q$ &0.1, 0.5, 1, 3, 5\\
Worker's external utility $u$ &10, 25, 50, 100, 200, 300, 400, 500, 600\\
 \bottomrule
\end{tabular}
\label{Tab:Param}
\end{table}

 In our discussion below, we focus on a subset of parameters with the biggest impact on wage theft to better understand how to reduce or eliminate the wage theft. We present the computational results via plotting the optimal verbal wages, wage thefts and the optimal effort levels. In each plot, the left $y$-axis shows the wages and right $y$-axis shows the effort level.

\textbf{Impact of penalty policy on wage theft:} To decrease the wage theft, the external agent (e.g., The Department of Labor) controls two main factors: the frequency of inspection, which determines the detection rate $\gamma$, and the penalty function $\eta(b)$.
Recall from Section \ref{sec:Fines} that the principal's ideal wage theft is $\beta$ where $\eta'(\beta)=1/\gamma$.  
For the penalty function $\eta(b)=\sigma b^p$, the principal's ideal wage theft is $\beta=(p\sigma\gamma)^{1/(1-p)}$.

In Figure \ref{fig:sigmaEffect}, we present the outcomes of the wage theft problem with respect to increasing $\sigma$ values, respectively for low ($\gamma=0.2$) and high ($\gamma=0.5$) detection rates. 
As expected from our theoretical analysis on $\beta$, we see that as the penalty coefficient $\sigma$ increases the wage theft goes to zero, making the effective wages $w_i-b_i$ equal to the promised verbal wages.
Notably, this shift occurs more rapidly with higher inspection rates (Figure \ref{fig:sub11} versus  \ref{fig:sub12}), i.e., both inspection rates and penalties are important for mitigating wage theft. 

\begin{figure}[!ht]
\centering
\begin{subfigure}{0.5\textwidth}
  \centering
  \includegraphics[width=1\linewidth]{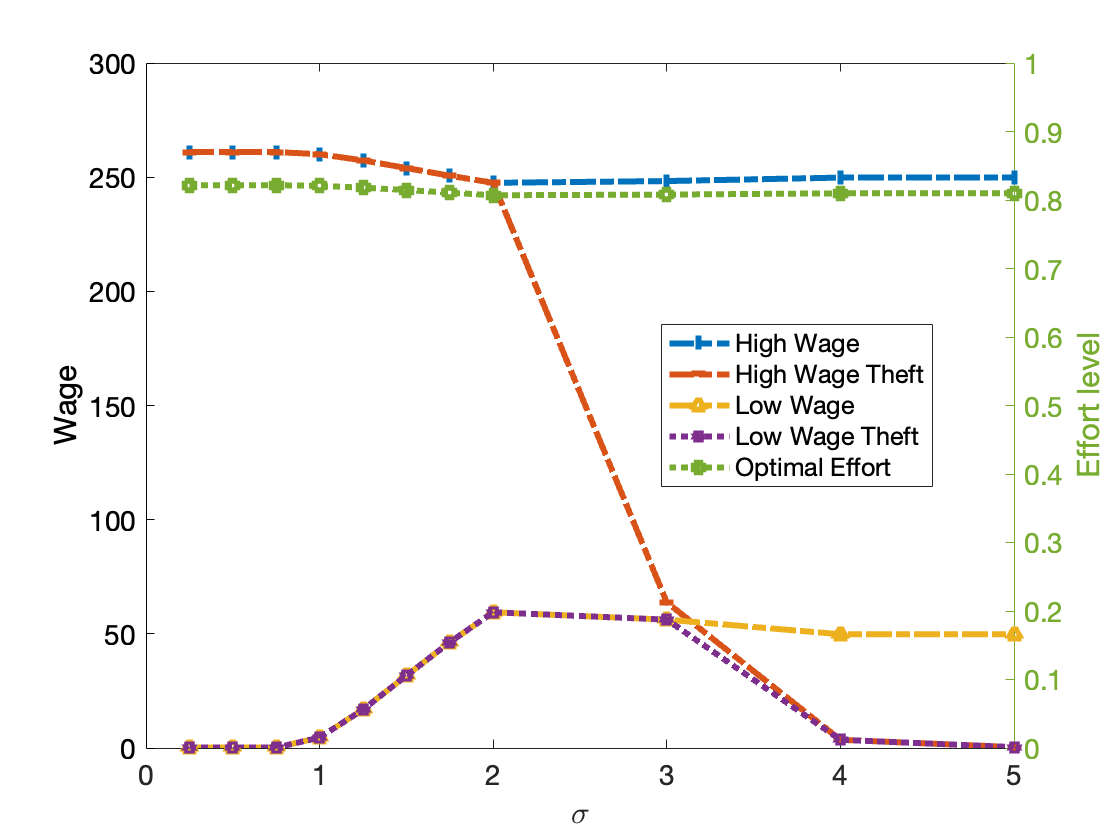}
  \caption{Low Inspection Rate ($\gamma =0.2$).}
  \label{fig:sub11}  
\end{subfigure}%
\begin{subfigure}{0.5\textwidth}
  \centering
  \includegraphics[width=1\linewidth]{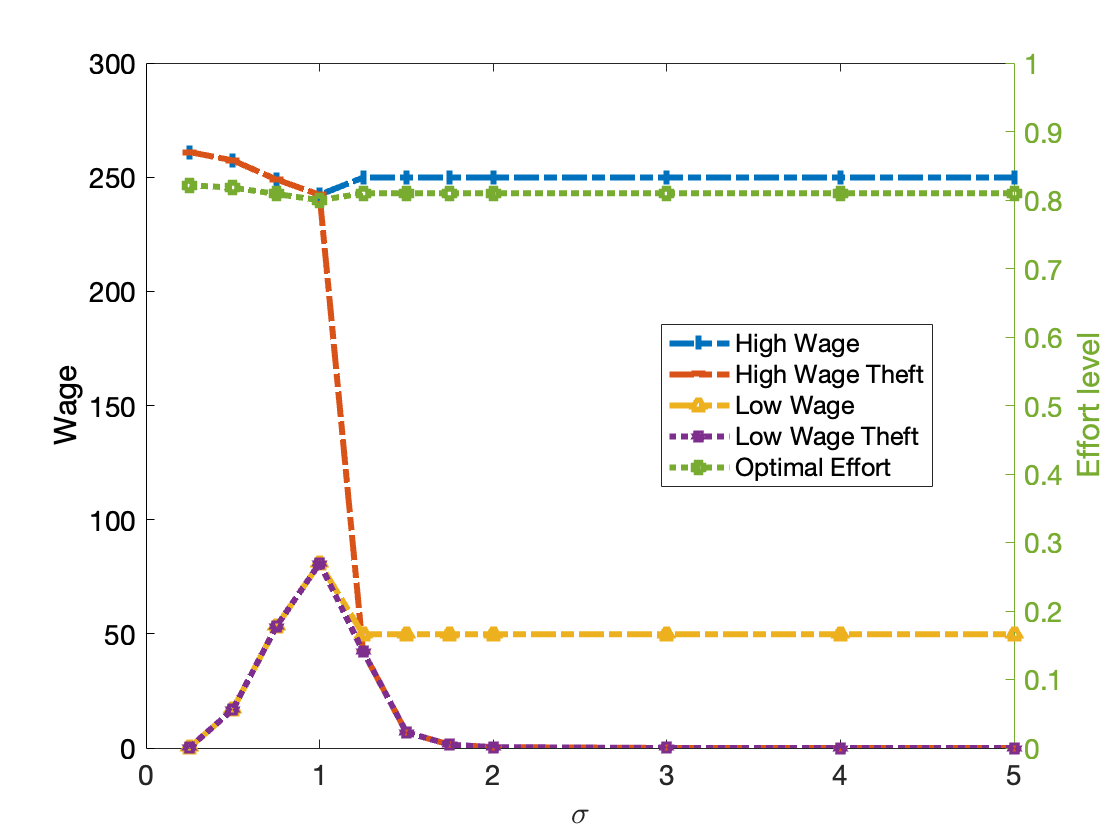}
  \caption{High Inspection Rate ($\gamma =0.5$).}
  \label{fig:sub12}
\end{subfigure}
\label{fig:sigmaEffect}
\caption{Impact of $\sigma$ for different inspection rates ($P=10$, $y_H=50$, $y_L=30$, $u=200$, $k=0.1$,  $q=3$, $p=1.1$).}
\label{fig:sigmaEffect}
\end{figure}

Another observation from Figure \ref{fig:sub11} is that when the penalties for theft are large (e.g., $\sigma>3$) that the promised wages are relatively stable and the amount of wage theft decays to zero. 
However, in contrast, when the barrier to theft is low (e.g., $\sigma <1$), the employer is able to artificially inflate the promised high wage to induce slightly higher effort with no intention of paying the worker. 
Further, since high wages are inflated, the employer is also able to promise the worker lower low wages to further disincentivize low level effort.
As the barrier to theft increases ($\sigma \in (1,2)$), we observe that the employer can no longer inflate promised wages. 
As a result, to ensure the worker's utility is at least the reservation utility (\ref{Eq:IR}), the employer must also increase the promised low wage.

We also observe that the penalty coefficient $\sigma$ has little impact on the optimal effort level. 
Our numerical experiments suggest that increasing penalties can effectively reduce wage theft without negatively impacting the output of the workers.

We also demonstrate the impact of inspection frequency in Figure \ref{fig:gammaEffect} for different penalty coefficients, $\sigma\in \{0.5, 1\}$. 
Similar to the impact of $\sigma$, we see that the wage theft amounts decrease as the external agent places more efforts into inspecting the employer, and the effective wages become closer to the promised wages. The required frequency of inspection to reduce the wage theft to a certain level depends on the penalty coefficient. For example, as seen in Figures \ref{fig:sub31} and \ref{fig:sub32}, similar wage theft quantities occur under high inspection ($\gamma=1$) with small penalty ($\sigma=0.5$) and under medium inspection ($\gamma=0.5)$ with high penalty ($\sigma=1$).

\begin{figure}[ht]
\centering
\begin{subfigure}{0.5\textwidth}
  \centering
  \includegraphics[width=1\linewidth]{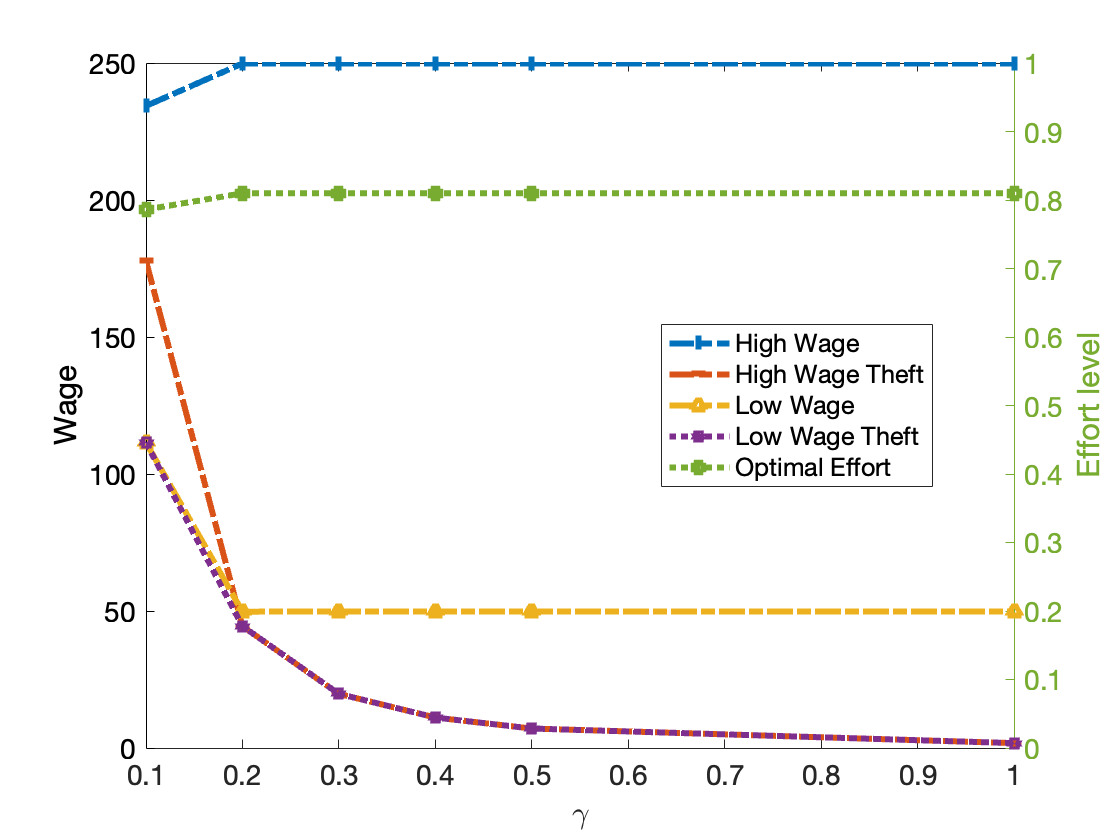}
  \caption{$\sigma=0.5$}
  \label{fig:sub31}  
\end{subfigure}%
\begin{subfigure}{0.5\textwidth}
  \centering
  \includegraphics[width=1\linewidth]{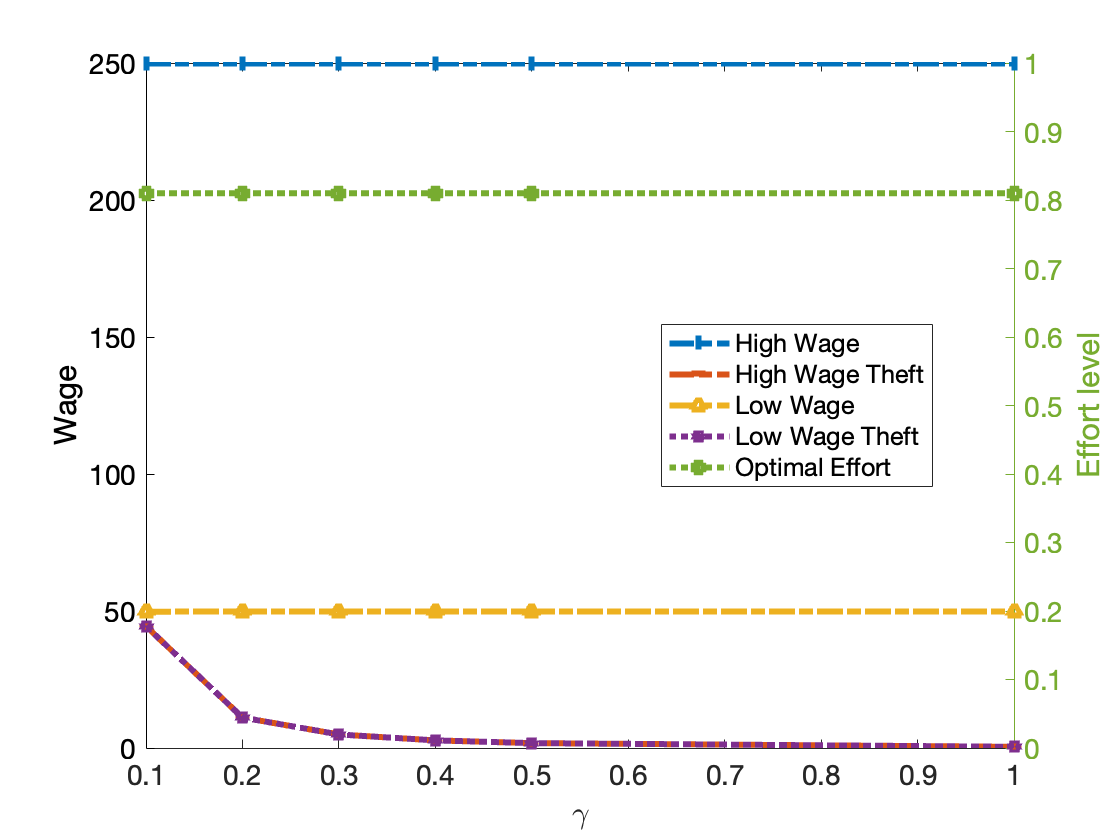}
  \caption{$\sigma=1$}
  \label{fig:sub32}
\end{subfigure}
\caption{Impact of $\gamma$ for different penalty coefficients. ($P=10$, $y_H=50$, $y_L=30$, $u=200$, $k=0.1$,  $q=3$, $p=1.5$)}
\label{fig:gammaEffect}
\end{figure}

\textbf{Impact of reservation utility on wage theft:} We now consider the impact of external utility on the verbal contract and wage theft amounts. In Figure \ref{fig:ubarEffect}, we present computational results for different reservation utilities under two different inspection rates, $\gamma=0.1$ and $\gamma=0.3$. 

\begin{figure}[ht]
\centering
\begin{subfigure}{0.5\textwidth}
  \centering
  \includegraphics[width=1\linewidth]{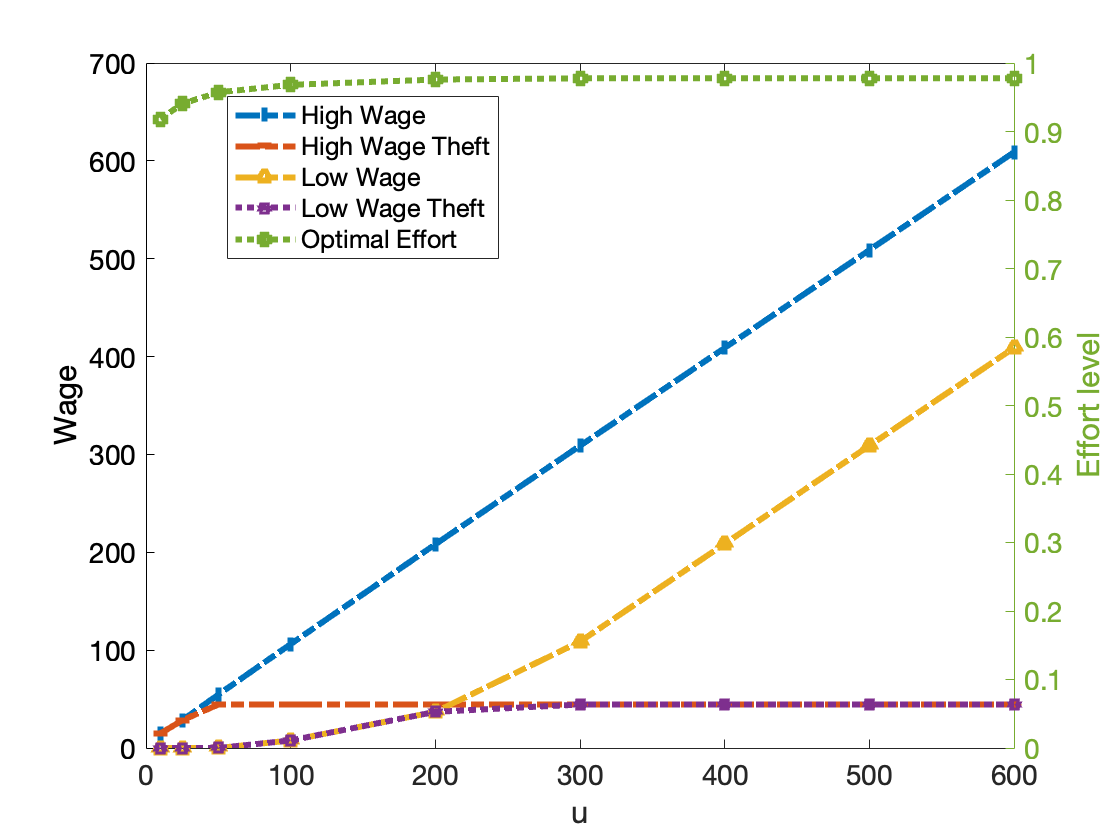}
  \caption{$\gamma=0.1$}
  \label{fig:sub41}  
\end{subfigure}%
\begin{subfigure}{0.5\textwidth}
  \centering
  \includegraphics[width=1\linewidth]{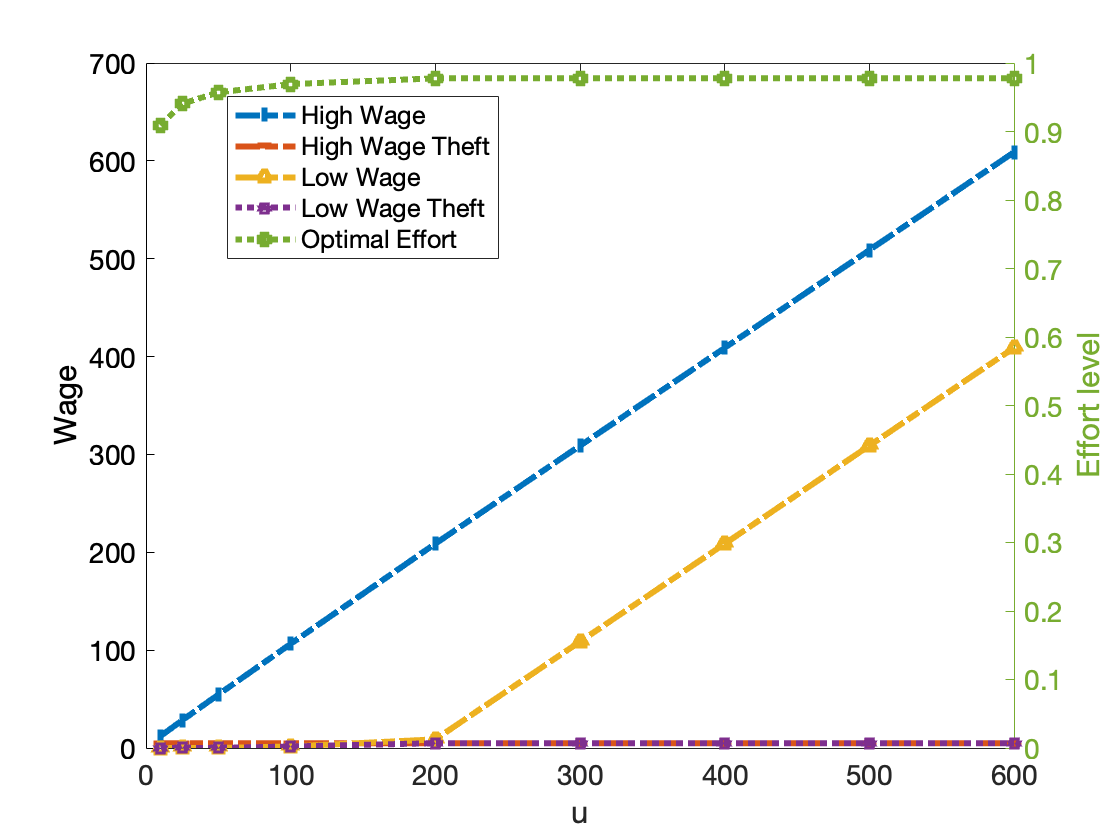}
  \caption{$\gamma=0.3$}
  \label{fig:sub42}
\end{subfigure}
\caption{Impact of $u$ for different inspection rates ($P=10$,  $y_H=50$, $y_L=30$, $\sigma=1$, $k=0.1$,  $q=1$, $p=1.5$).}
\label{fig:ubarEffect}
\end{figure}

Note that the reservation utility has no effect on the employer's ideal wage theft $\beta$. Therefore, the reservation utility affects the wage theft indirectly. 
As the reservation utility increases, the promised wages increase as well. This is expected since the employer needs to make the verbal offer more attractive than the external offers (reservation utility) due to  (\ref{Eq:IR}). 
However, this increase does not follow the same pattern on high and low verbal wages; while the high wage is increasing due to more competitive external options, the low wage does not benefit from having better external options when $u$ is less than the revenue gap between the high and low outputs (i.e., when $u \leq P(y^H-y^L)$), as observed in Figure \ref{fig:sub42} ($u \leq 200$). 
This implies, when wage theft is present, the efforts to create better external opportunities may be less effective in improving actual wages (i.e., verbal wage minus the wage theft) unless their reservation utility exceeds a certain threshold.

These results demonstrate that an increase in reservation utility does not necessarily translate to the same increase in expected effective wages when there is a low barrier to theft; while the promised wages increase, the weak deterrence against theft results in a proportion of the additional wages being stolen. 
These artificially inflated promised wages also create a perception of higher reservation utilities for workers, which could create an unfair competition for companies that comply with the law \citep{minkler2014wage}.

\textbf{Impact of worker's cost functions on wage theft:} 
Another factor that is partially under the worker's control is the cost function. 
In the cost function $C(a)=\frac{ka}{(1-a)^q}$, where $0\leq k$ and $0<q$, the cost of an effort level $a$ increases as $k$ or $q$ increase. 
Therefore, smaller values of $k$ and $q$ can be interpreted as the worker becoming more efficient or more capable. 
We remark that in some settings the reservation utility of a worker improves as the worker's ability increases, i.e., workers with more capabilities can find better jobs. 
However, this is not necessarily the case for workers in vulnerable situations who may not have an access to the same opportunities, which limits their reservation utility.
Thus, our experiments isolate the direct impact of worker skill level without considering the indirect effect caused by increased reservation utilities.

We present sample computational results associated with different cost function parameters in the figures below. Figure \ref{fig:kEffect} shows results for different $k$ values when $q \in \{1, 3\}$, and Figure \ref{fig:qEffect} displays the results for increasing $q$ values when $k\in \{0.1, 1\}$.
Recall that the worker's cost function has no impact on the employer's ideal wage theft $\beta$. Hence, similar to the reservation utility, the cost function affects the wage theft quantities indirectly.

\begin{figure}[!ht]
\centering
\begin{subfigure}{0.5\textwidth}
  \centering
  \includegraphics[width=1\linewidth]{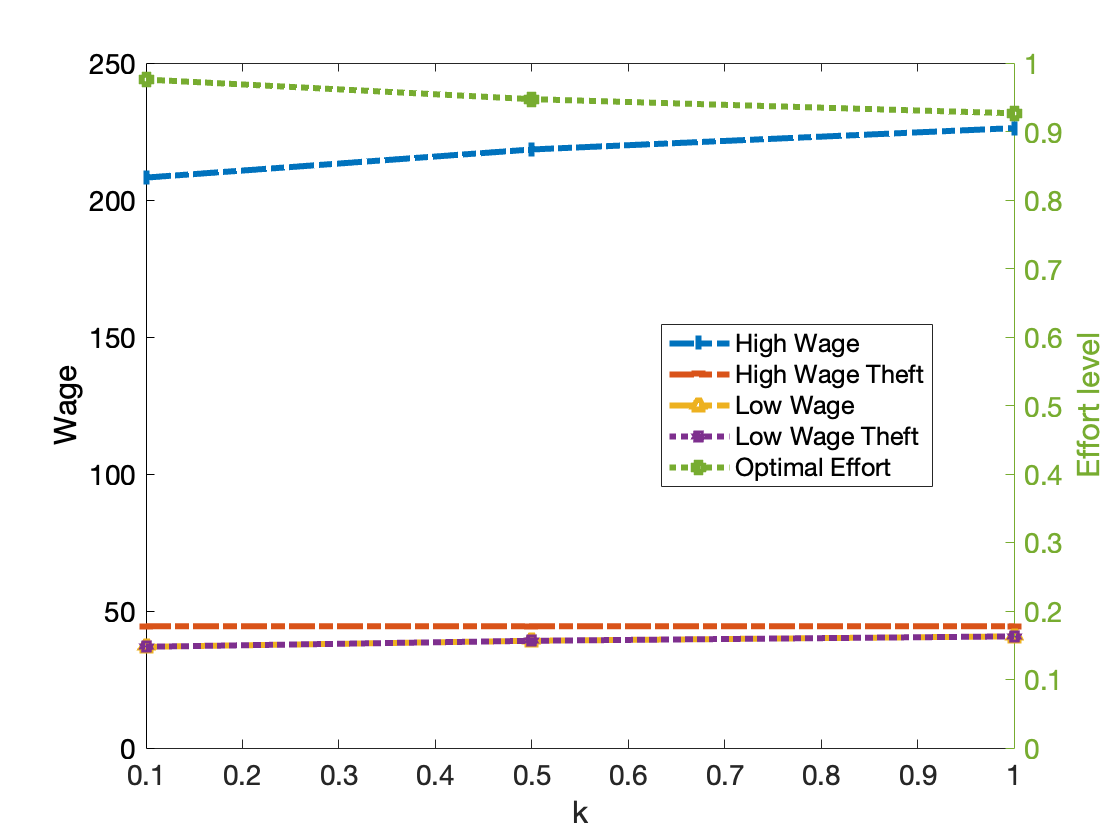}
  \caption{$q=1$}
  \label{fig:sub51}  
\end{subfigure}%
\begin{subfigure}{0.5\textwidth}
  \centering
  \includegraphics[width=1\linewidth]{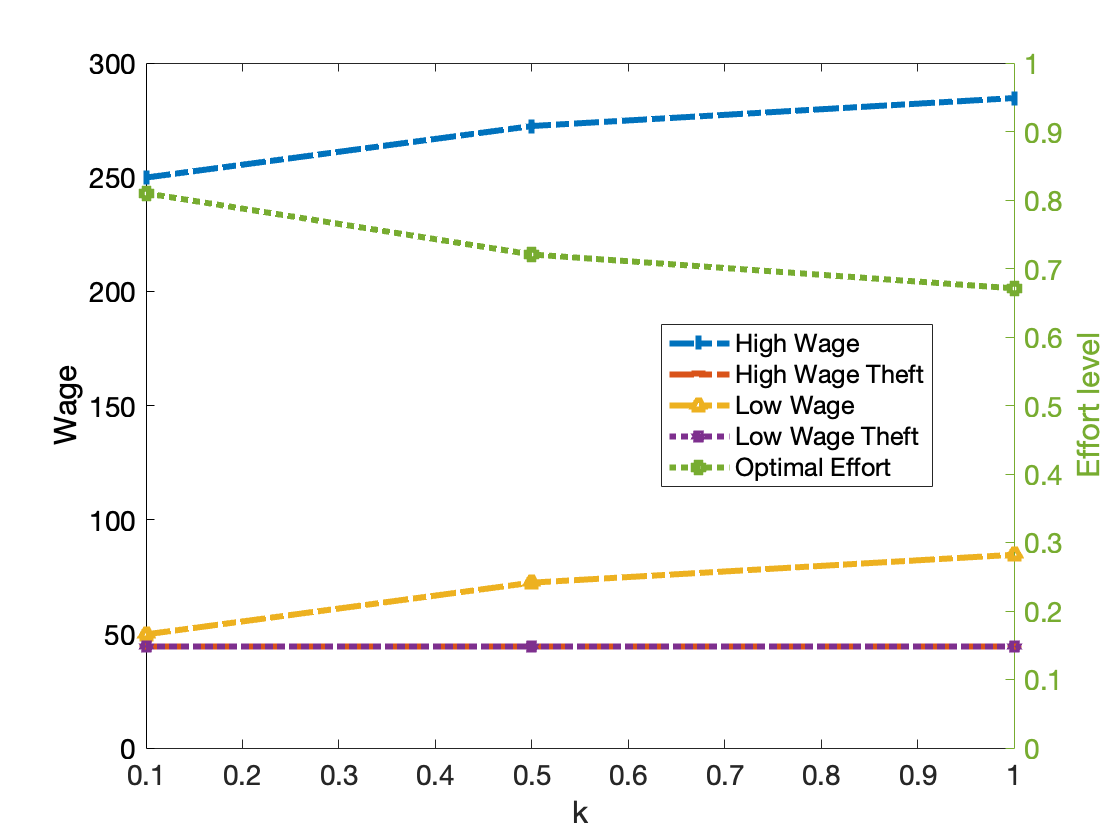}
  \caption{$q=3$}
  \label{fig:sub52}
\end{subfigure}
\caption{Impact of $k$ for different $q$ values ($P=10$,  $y_H=50$, $y_L=30$, $\sigma=1$, $\gamma=0.1$, $u=200$, $p=1.5$).}
\label{fig:kEffect}
\end{figure}

\begin{figure}[!ht]
\centering
\begin{subfigure}{0.5\textwidth}

  \centering

  \includegraphics[width=1\linewidth]{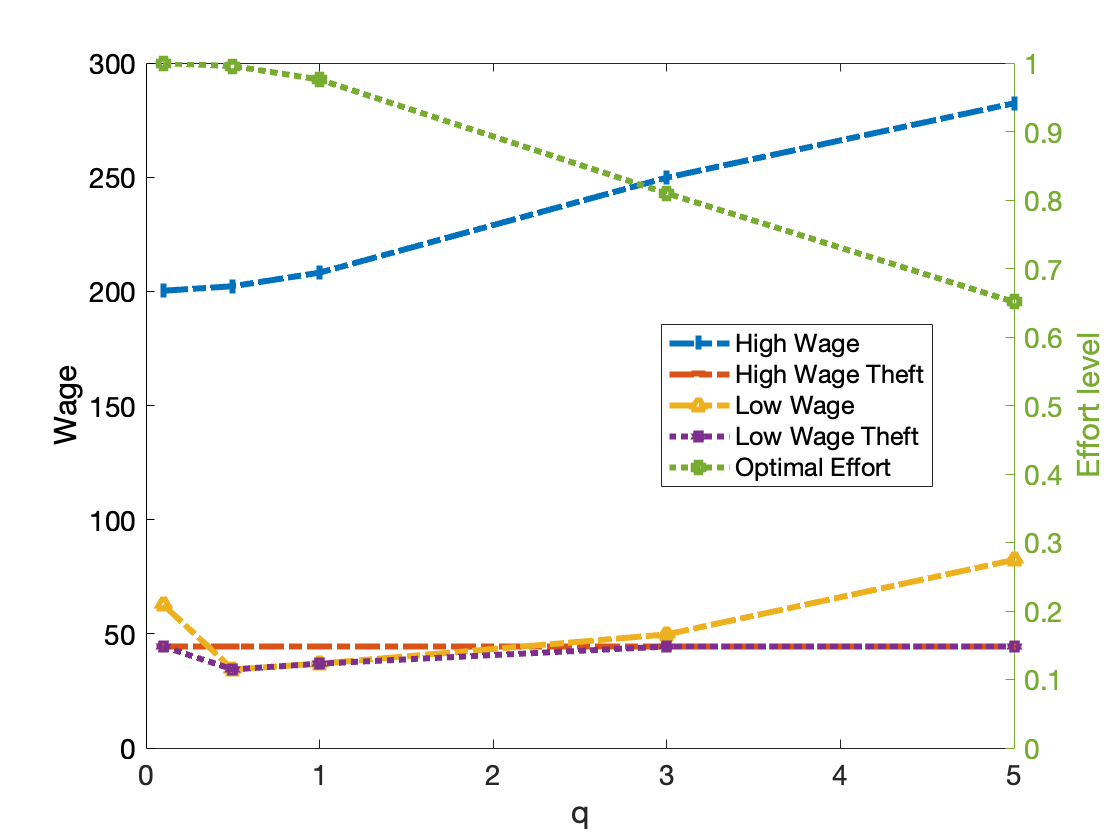}
  \caption{$k=0.1$}
  \label{fig:sub61}  
\end{subfigure}%
\begin{subfigure}{0.5\textwidth}
  \centering
  \includegraphics[width=1\linewidth]{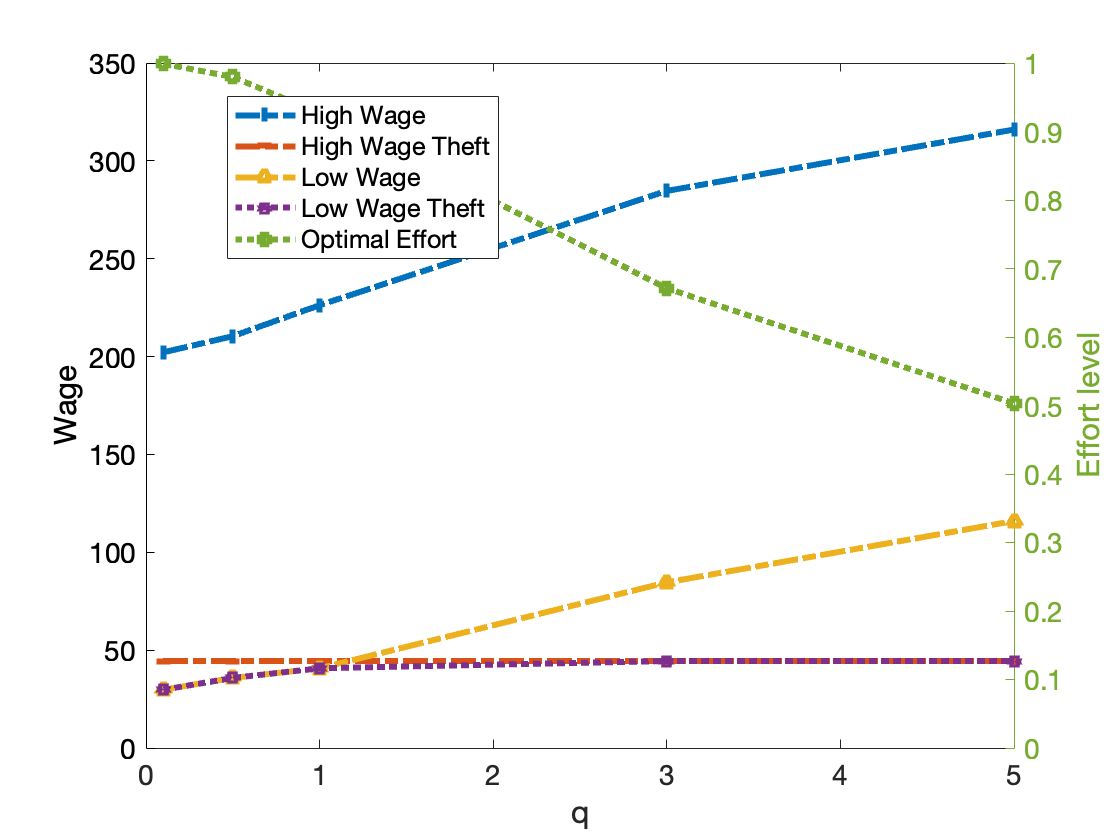}
  \caption{$k=1$}
  \label{fig:sub62}
\end{subfigure}
\caption{Impact of $q$ for different $k$ values ($P=10$,  $y_H=50$, $y_L=30$, $\sigma=1$, $\gamma=0.1$, $u=200$, $p=1.5$).}
\label{fig:qEffect}
\end{figure}

As seen in the figures, when the effort becomes more costly for the worker, the employer needs to offer a higher verbal high wage to compensate the worker accordingly and the effective wage theft also increases due to bounded ideal theft. 
Further, as the cost of effort increases, the optimal effort  level (equivalently, the probability of a high outcome) $a^{*}$ decreases, which becomes significant if the cost of effort increases exponentially (as $q$ increases) as seen in Figure \ref{fig:qEffect}. 
In response, the employer may offer a higher verbal low-wage $w_L$, which can be interpreted as a form of insurance for workers in case of low output, thereby increasing the expected promised wage and ensuring worker participation.

Overall, factors including reservation utility and the worker's cost function do not have an immediate effect on the employer's ideal wage theft, and hence do not directly change the wage theft quantity. 
Instead, if the change in these parameters leads to increased verbal offers, the workers become better off in terms of the percentage of the wage theft (i.e., $b_i/w_i$, $i\in{H, L}$) since $b_i$ does not change when $w_i> \beta$.

\section{Impact of Worker Awareness on Wage Theft}\label{Sec:HasAwareness}

As established in the previous sections, the only way in the current model to completely mitigate wage theft when the worker has no awareness is to have large penalties and inspection rates.  
As suggested by Theorem \ref{thm:eliminateTheft}, the required penalties and inspections rates can be quite large, e.g., with a large inspection rate of $\gamma=10\%$, the marginal penalty would need to be at least $1/\gamma=10$ times larger than the marginal theft, which does not reflect the typical penalty schemes currently in use \citep{Backpay}.

In this section, we explore another avenue to mitigate wage theft. 
Specifically, we consider the potential benefit of raising wage theft awareness. 
For this purpose, we introduce a multi-period, repeated game where the worker has a forecast for the wage theft in the current period based on historical occurrences of theft. 
These historical observations may be based on personal experiences, shared experiences via word-of-mouth, or advertisement campaigns targeting wage theft. 
Further, employers are aware that workers have this forecast when determining their effort level.
We then characterize the dynamic game model to understand the potential benefit of raising wage theft awareness. 

In the repeated game setting, at time period $t$, the employer determines the promised wages $w_i^t$ for output level $i\in \{H,L\}$ and the amount of wage theft $b_i^t$ for $i\in \{H,L\}$, as before. 
The worker then determines their effort level $a^t$ based on the promised wages $w_i^t$ for $i\in \{H,L\}$. 
However, in contrast to the one-stage model, the worker has a forecast of the  wage theft $\hat{b}_i^t$ as a function of $(b_i^1,...,b_i^{t-1})$ for $i\in \{H, L\}$ based on historical observations of wage theft.
As a result, a worker believes they will only receive $(w_i^t-\hat{b}_i^t)^+=\max\{0, w_i^t-\hat{b}_i^t\}$ for outcome $i\in \{H,L\}$.
This results in the following repeated-game.
\begin{align}
    \max_{\a^t, w_H^t, w_L^t, b_H^t, b_L^t} &\ \a^t (\H - w_H^t +b_H^t - \P(b_H^t))+(1-\a^t) (\L - w_L^t +b_L^t - \P(b_L^t)) \tag{Employer Profit}\\
  \a^t &\in \arg\max_{a'\in [0,1)} \left\{ \a' (w_H^t{\color{black}-\hat{b}^{t}_H})^+ + (1-\a')(w_L^t{\color{black}-\hat{b}_L^t})^+ - \C(\a') \right\}\tag{Incentive Compatibility}\\
   \a^t& (w_H^t{\color{black}-\hat{b}^{t}_H})^+ + (1-\a^t) (w_L^t{\color{black}-\hat{b}_L^t})^+ - \C(a^t) \geq u \tag{Individual Rationality}\\  
    0&\leq b_L^t \leq w_L^t, \ \
    0\leq b_H^t \leq w_H^t,\tag{Wage Bounds}\\
    \a^t& \in [0,1] \tag{Probability of High Outcome}
\end{align}

There are many methods for a worker to forecast future wage theft.  
We introduce the class of \textit{time-converging} forecasts.
A time-converging forecast simply will eventually predict the actual wage theft if the wage theft becomes stationary. 
Formally, a forecast $\hat{b}$ is time-converging if $b_i^{t}=b_i$ for all $t\geq t_0$ implies $\hat{b}_i^t(b^1_i,...,b^{t_0}_i,b_i,..., b_i)\to b_i$ as $t\to \infty$.
Examples of common time-converging forecasts include the most recent theft $\hat{b}_i^t=b_{i}^{t-1}$, the average theft $\hat{b}_i^t=\sum_{s=1}^t b_i^s/t$, and standard forecasting rules such as moving averages and exponential smoothing rules.
We assume that workers use a time-converging forecast, which is a weak assumption given standard methods are time-converging.

Similarly, there are many methods for the employer to determine their strategy in each iteration which may include methods built around bounded rationality, online optimization techniques, reinforcement learning, and subgame perfect equilibria.
In this paper, we focus on an employer that uses a fixed strategy, namely, we assume the employer offers the same wages and plans to steal the same amount in every iteration, i.e., the employer employs a fixed strategy of $(w_L^{t},w_H^{t},b_L^{t},b_H^{t})=(w_L,w_H,b_L,b_H)$. This assumption of fixed wages/theft is well supported in practice.  Studies show that small businesses are less likely to invest in sophisticated payroll systems and prefer more straightforward and consistent wage practices \citep{NFIB2017}. In areas with low regulatory scrutiny, \cite{Bernhardt2009} showed that 
employers are more likely to engage in wage theft without frequent adjustments to their strategies. In addition, many companies establish fixed wage policies to ensure uniformity and reduce administrative burden (e.g., piece-rate payment systems in the garment industry). These practices can become habitual, especially in businesses with high employee turnover, where making continuous strategy adjustments is less practical \citep{CooperKroeger2017}.

Assuming the employer uses a fixed strategy, and since the workers use a time-converging forecast, the forecast for the wage theft $\hat{b}_i^t$ also converges to $b_i^*$ thereby implying the worker effort level $a^{t*}$ also converges to an invariant $a^*$. 
As a result, the repeated-game reduces to the following one-stage game:
\begin{align}
    \max_{\a, w_H, w_L, b_H, b_L} &\ \a\cdot (\H - w_H +b_H - \P(b_H))+(1-\a)\cdot (\L - w_L +b_L - \P(b_L)) \tag{Employer Profit}\\
  \a &\in \arg\max_{a'\in [0,1)} \left\{ \a'\cdot (w_H-b_H)^+ + (1-\a')\cdot (w_L-b_L) - \C(\a') \right\} \tag{Incentive Compatibility}\\
   \a&\cdot (w_H-b_H) + (1-\a)\cdot (w_L-b_L)^+ - \C(a) \geq u \tag{Individual Rationality}\\  
    0&\leq b_L \leq w_L, \ \
    0\leq b_H \leq w_H, \tag{Wage Bounds}\\
    \a& \in [0,1] \tag{Probability of High Outcome}
\end{align}
In Theorem \ref{thm:optimalfixed}, we show worker awareness can eliminate wage theft when employers use a optimal fixed strategy.

\begin{restatable}{theorem}{optimalfixed}\label{thm:optimalfixed}
      Suppose $\c(0)<\H-\L$ and that workers have a time-converging forecast of wage theft in the repeated wage theft game where the employer and worker use fixed strategies. 
    Then there exists an optimal fixed strategy $(a^*, w_H^*, w_L^*, b_H^*, b_L^*)$ where $a^*\in (0,1)$ and $b_H^*=b_L^*=0$.   
\end{restatable}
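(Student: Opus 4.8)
The plan is to exploit the fact that, once the worker anticipates theft, withholding wages becomes self-defeating: the worker perceives only the effective wage $w_i-b_i$, so to keep the worker participating and exerting effort the employer must inflate the promised wage back up, yet still pays the expected penalty $\gamma\,\eta(b_i)$ for nothing. As set up in the discussion preceding the theorem, a fixed employer strategy together with a time-converging worker forecast makes $\hat b_i^t$ converge to the constant theft $b_i$ and the effort $a^{t*}$ converge to an invariant $a^*$, so the analysis collapses to the reduced one-stage game displayed just before the theorem. In that game $b_i$ enters the incentive-compatibility and individual-rationality constraints only through $w_i-b_i$ (the $(\cdot)^+$'s being vacuous since $0\le b_i\le w_i$), whereas it enters the employer's profit through $-(w_i-b_i)$ and the penalty $-\gamma\,\eta(b_i)$; I would work entirely with this one-stage problem.

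The first step is a domination argument. Given any feasible $(a,w_H,w_L,b_H,b_L)$, I would compare it against the ``effective-wage'' strategy $(a,\,w_H-b_H,\,w_L-b_L,\,0,\,0)$. It is feasible: $w_i-b_i\ge 0$ by the wage bounds, the effective wages are unchanged so incentive compatibility and individual rationality still hold at the same $a$, and $a$ is unchanged. Because $-(w_i-b_i)$ is common to the two profit expressions and $\eta(0)=0$, the profit of the effective-wage strategy exceeds that of the original by exactly $\gamma\,\bigl[\,a\,\eta(b_H)+(1-a)\,\eta(b_L)\,\bigr]\ge 0$, using $\gamma\ge 0$, $a\in[0,1]$, and $\eta\ge 0$ (as $\eta$ is increasing with $\eta(0)=0$). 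Hence the supremum of the employer's profit over all feasible fixed strategies equals its supremum over feasible strategies with $b_H=b_L=0$.

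The second step identifies the zero-theft subproblem with a case already handled. With $b_H=b_L=0$ imposed, Proposition~\ref{prop:Wages} (whose proof does not involve the theft variables) still gives $w_L^*(a)$ and $w_H^*(a)$ as the cheapest promised wages inducing effort $a$, so the employer's problem becomes $\max_{a\in[0,1)} g(a)$ with the theft terms deleted --- precisely the objective of (\ref{eqn:1DSearch}) specialized to $\beta=0$. Since $0\le\beta=0\le u$, the argument of Proposition~\ref{prop:existence} yields an optimal $a^*\in[0,1)$, and because the hypothesis $\c(0)<\H-\L$ is exactly the condition of Proposition~\ref{prop:positive} --- which is necessary and sufficient for $a^*>0$ when $0\le\beta\le u$ --- we get $a^*\in(0,1)$. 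The fixed strategy $(a^*,\,w_H^*(a^*),\,w_L^*(a^*),\,0,\,0)$ is then feasible, attains the overall supremum, and has $b_H^*=b_L^*=0$, which is the claim; Corollary~\ref{cor:uniqueness} additionally fixes the accompanying wages uniquely.

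I expect the only delicate points to be bookkeeping rather than conceptual: verifying that individual rationality is preserved under the replacement (it is, since only $w_i-b_i$ appears there) and confirming that the existence and positive-effort arguments of Propositions~\ref{prop:existence} and~\ref{prop:positive} go through verbatim at $\beta=0$ (they do --- those proofs use only $\beta\ge 0$, and the latter explicitly covers $\beta\le u$). Everything else reduces to the single inequality $\gamma\bigl[a\,\eta(b_H)+(1-a)\,\eta(b_L)\bigr]\ge 0$ together with attainment of the one-dimensional maximum.
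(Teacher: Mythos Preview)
Your proposal is correct and follows essentially the same route as the paper: the domination argument replacing $(w_i,b_i)$ by $(w_i-b_i,0)$ is exactly the content of Theorem~\ref{thm:notheft.refined}, and the paper likewise finishes by noting that with $b_i=0$ the reduced game coincides with the original one-stage model, so Propositions~\ref{prop:existence} and~\ref{prop:positive} deliver $a^*\in(0,1)$. Your framing of the zero-theft subproblem as the $\beta=0$ specialization of (\ref{eqn:1DSearch}) is a bit more explicit than the paper's one-line remark, but the substance is identical.
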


Theorem \ref{thm:optimalfixed} indicates that in a repeated wage theft game where both the employer and worker follow fixed strategies and workers have a time-converging forecast of wage theft, an optimal fixed strategy exists. In this optimal strategy, the worker exerts a positive level of effort $a \in (0,1)$ and the employer does not engage in wage theft $b^*_H=b^*_L=0$. This result highlights that, in the long run, avoiding wage theft is the most effective approach for employers, as honesty leads to the best outcomes when workers can predict wage theft over time. In other words, being honest is not only ethically sound but also optimal from a strategic perspective.

In practice, employers may deviate from optimal fixed strategies due to model inaccuracies, lacking full knowledge of the workers’ abilities or the actual costs of wage theft, and many other factors. However, our next result demonstrates that even when employers follow a given fixed wage strategy (not necessarily optimal), as long as workers can accurately predict potential wage theft based on historical data, it remains advantageous for employers to maintain honesty and refrain from wage theft.

\begin{restatable}{theorem}{dominate}\label{thm:notheft.refined}
    Suppose that workers have a time-converging forecast of wage theft in the repeated wage theft game where the employer and worker use fixed strategies. 
    Then the employer strategy of promising $w_i$ and stealing $b_i$ for $i\in \{H,L\}$ is (weakly) dominated by the strategy of promising $(w_i-b_i)$ and stealing $0$ for $i\in \{H,L\}$.
    The dominance is strict when $a>0$.
\end{restatable}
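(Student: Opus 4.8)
The plan is to work in the reduced one-stage game displayed just above the theorem, exploiting that a fixed employer strategy combined with a time-converging forecast makes the worker's forecast converge to the actual (fixed) amount stolen. Fix any fixed employer strategy $(w_H,w_L,b_H,b_L)$ obeying the Wage Bounds, and let $a\in[0,1)$ denote the effort the worker settles on in the limit (if no effort satisfies participation, the worker rejects and the employer earns $0$). Put $(\tilde w_H,\tilde w_L,\tilde b_H,\tilde b_L):=(w_H-b_H,\ w_L-b_L,\ 0,\ 0)$. First I would verify feasibility of the alternative: the Wage Bounds give $w_i-b_i\ge 0$, so $\tilde w_i\ge 0$ and $0\le \tilde b_i\le \tilde w_i$ for $i\in\{H,L\}$.

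The key step is that the two strategies present the worker with the \emph{same} effort-selection problem. Under the original strategy the worker's forecast obeys $\hat b_i^t\to b_i$, so the perceived effective wage $(w_i-\hat b_i^t)^+$ converges to $(w_i-b_i)^+=w_i-b_i$ (using $b_i\le w_i$); under the alternative the theft is identically $0$, its forecast converges to $0$, and the perceived effective wage converges to $(w_i-b_i-0)^+=w_i-b_i$ as well. Hence the limiting (Incentive Compatibility) and (Individual Rationality) conditions are literally the same for both strategies, so the worker either rejects under both — a tie at employer payoff $0$ — or accepts under both and chooses the same limiting effort $a$. Assume the latter, since otherwise there is nothing to prove.

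It then remains to compare the employer's stationary profit. In output state $i\in\{H,L\}$ the original strategy pays the employer $P\cdot y_i-w_i+b_i-\P(b_i)$, while the alternative pays $P\cdot y_i-(w_i-b_i)-\P(0)=P\cdot y_i-w_i+b_i$ since $\eta(0)=0$. So state by state the alternative beats the original by exactly $\P(b_i)$, and averaging over the two states with probabilities $a$ and $1-a$ shows the alternative's expected profit exceeds the original's by
\[
a\cdot\P(b_H)+(1-a)\cdot\P(b_L),
\]
which is $\ge 0$ because $\gamma>0$ and $\eta$ is increasing with $\eta(0)=0$; this is the asserted weak dominance. For strictness, if $b_H=b_L=0$ the strategies coincide and there is nothing to say; otherwise some $b_i>0$ gives $\eta(b_i)>0$, and since $a>0$ and $a<1$ (the worker optimizes over $[0,1)$), at least one of $a\cdot\P(b_H)$ and $(1-a)\cdot\P(b_L)$ is strictly positive, so the alternative strictly dominates.

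The only genuine obstacle — and it is mild, since the text has already argued the repeated game collapses to the stationary game — is to make that collapse precise here: namely that under a fixed employer strategy the time-converging forecast satisfies $\hat b_i^t\to b_i$ along the original play and $\hat b_i^t\to 0$ along the honest alternative, and that the worker's limiting effort is a single well-defined object common to the two strategies, which is exactly what the coincidence of the (Incentive Compatibility)/(Individual Rationality) pair above supplies. Everything past that point is the one-line per-state penalty accounting.
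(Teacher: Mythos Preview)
Your proposal is correct and follows essentially the same argument as the paper: reduce to the one-stage game, replace $(w_i,b_i)$ by $(w_i-b_i,0)$, observe that the (Incentive Compatibility) and (Individual Rationality) constraints are literally unchanged so the worker's behavior coincides, and then read off the profit gain $a\cdot\P(b_H)+(1-a)\cdot\P(b_L)\ge 0$. Your treatment is slightly more explicit about the forecast convergence and the rejection case, but the substance is identical to the paper's proof.
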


The proofs of both theorems appear in Appendix \ref{app:Repeated}.
In summary, Theorems \ref{thm:optimalfixed} and \ref{thm:notheft.refined} indicate that when workers can effectively forecast wage theft over time, the employer's strategy of promising a higher wage and then stealing part of it is no better than simply being honest and offering a lower wage upfront with no theft. Being honest proves to be the superior long-term strategy for the firm, as workers will eventually adjust their behavior in response to wage theft. This underscores the value of enhancing worker awareness as a mechanism to mitigate wage theft. Platforms that disclose historical wage theft data for specific employers could serve as powerful tools in combating wage theft by promoting transparency, honesty, and accountability.

\section{Conclusion}
\label{Sec:Conclusions}
In this paper, we analyze the dynamics of wage theft in day labor markets using both a single-period and a repeated principal-agent model, where employers may withhold wages from workers based on the outcome of short-term jobs. We explore scenarios where workers are either unaware or aware of potential wage theft. Our analysis reveals that wage theft is likely to persist even with frequent inspections unless penalties are substantially increased. We also find that workers with lower reservation utilities are disproportionately affected, losing a larger portion of their wages, and that improving worker efficiency does little to mitigate wage theft without better employment opportunities. Additionally, we consider the impact of worker awareness on wage theft, showing that informed workers can reduce the likelihood of exploitation. This underscores the importance of not only enforcing stronger penalties but also enhancing worker awareness through education and community efforts. Our findings contribute to a deeper understanding of wage theft dynamics and suggest pathways for future research aimed at developing more effective strategies to protect vulnerable workers in these informal labor markets.

As this work represents only a first step in the theoretical examination of wage theft, the future research directions are numerous. For example, future work could explore the heterogeneity of employers, examining how differences in risk aversion, resources, and regulatory scrutiny influence wage theft strategies. Another promising direction involves the role of technology, investigating how innovations like blockchain for wage tracking or mobile reporting systems could improve transparency and reduce wage theft. Furthermore, studying the psychological and behavioral aspects of wage theft, such as its impact on worker morale or employers' ethical decision-making, could yield new insights. Finally, comparative studies across sectors or countries with varying regulatory frameworks could help identify institutional factors that either exacerbate or mitigate wage theft.

\bibliographystyle{informs2014} 
\bibliography{reference.bib}

\begin{appendices}

\section{Proof of Proposition \ref{prop:IdealTheft}: Optimal Wage Theft}\label{app:theft}
\IdealTheft*
\begin{proof}
    We show the result only for $b_H$; the result follows identically for $b_L$. 
    By definition, $b_H\leq w_H$.
    Recall the objective value for the wage theft problem is
    \begin{align*}
        g(\a, w_h, w_l, b_h, b_l):=&\ \a\cdot (\H - w_H +b_H - \P(b_H))\ +\\ &(1-\a)\cdot (\L - w_L +b_L - \P(b_L)).
    \end{align*}
    Consider any feasible solution where $b_H< \min\{\beta, w_H^*\}$.
    First, suppose the feasible solution has $\a=0$. 
    Since the only constraint containing $b_H$ is $b_H\leq w_H$, we may increase $b_H$ to $b_H'=\min\{\beta, w_H^*\}$ while maintaining feasibility. 
    Further, since $\a=0$, $b_H$ has no impact on the the objective value and therefore the new feasible solution has the same value. 
    Thus, if there is an optimal solution with $\a^*=0$, then there is at least one optimal solution with $b_H^*=\min\{\beta, w_H^*\}$.

    Second, suppose the feasible solution has $a>0$.     
    Then $dg/db_H= \a\cdot (1 - \P'(b_H))> 0$, since $b_H< \min\{\beta,w_H^*\}$, $\P'(\beta)=1$, and $\P$ is strictly convex (implying $\P'$ is strictly increasing). 
    Further, since the only constraint containing $b_H$ is $b_H\leq w_H$, we may increase $b_H$ thereby improving the quality of the current solution. 
    Therefore the optimal solution must have $b_H^*=\min\{\beta,w_H^*\}$.  

    The proof for $b_L$ follows identically with cases $a=1$ and when $a<1$. 
\end{proof}

\section{Proof of Proposition \ref{prop:Wages}: Optimal Wages}

\label{app:wages}

\Wages*

\begin{proof}
Our proof is in two parts; we first prove the result on optimal high wage and then on the optimal low wage.

The (\ref{Eq:IC}) constraint is 
\begin{align*}
    a\in \arg\max_{a'\in [0,1]} IC(a'):=a'\cdot w_H + (1-a')\cdot w_L-C(a'). 
\end{align*}

Observe that $IC'(a)=w_H-w_L-C'(a)$ and $IC'(0)\geq 0$ and $IC'(1)\to -\infty$ and therefore for $a$ to maximize $IC(a)$, $IC'(a)=0$ and $w_H=w_L+C'(a)$, which completes the first part of the proof.
Further, $IC'$ is strictly increasing with respect to $w_H$ and therefore $w_H=w_L+C'(a)$ is the unique solution for $IC'(a)=0$. 

By Proposition \ref{prop:IdealTheft}, $b_i=\min\{\beta, w_i\}$. 
By the first part of the proof, we have $w_H=w_L+\c (\a)\geq w_L \geq 0$.  
After making these substitutions, rearranging terms, and removing the now-redundant constraint $w_H\geq 0$, the optimization problem reduces to:
\begin{align*}
\max_{\a, w_L} \ h(\a, w_L)= \a\cdot &(\H - w_L-\c (\a) +\min\{\beta, w_L+\c (\a)\} - \P(\min\{\beta, w_L+\c (\a)\})) \\
   +(1-\a)\cdot &(\L - w_L +\min\{\beta, w_L\} - \P(\min\{\beta, w_L\}))\tag{Employer Profit}\\ 
  w_L &+ \a \cdot \c (\a) - \C(a)\geq u \tag{Individual Rationality}\\
    w_L&\geq 0 \tag{Non-Negative Wages}\\
    a& \in [0,1] \tag{Probability of High Outcome}
\end{align*}

Observe that the objective function $\ h(\a, w_L)$ is a function of $w_L$ for any fixed effort level $\a$. 
For a fixed $\a$, we first break the objective into three cases: $w_L \in [0, \beta - \c (\a)]$ (equivalently, $w_L\leq w_H=w_L+\c(\a)\leq \beta$) , $w_L \in [\beta - \c (\a), \beta]$ ($w_L\leq \beta$ but $w_H\geq \beta$), and $w_L \in [\beta, \infty)$ ($w_H\geq w_L \geq \beta$), and show that $\partial h / \partial w_L\leq 0$ in each region.

Case 1: $w_L \in [0, \beta - \c (\a)]$, implying $w_L \leq \beta-\c(a)$ and  $w_L \leq w_H \leq \beta$. 
Therefore $b_i=w_i$ and the objective function simplifies to:
\begin{align*}
    h_1(\a, w_L):&= \a\cdot (\H - \P(w_L+ \c (\a)) + (1-\a)\cdot (\L - \P(w_L)),\\
    \frac{\partial h_1}{\partial w_L}&= -\a\cdot \P'(w_L + \c (\a)) - (1-\a) \cdot \P'(w_L)  \leq 0, 
\end{align*}
since $\eta$ is convex and $\eta'(0)\geq 0$ indicating $\eta'(w_L+\c (\a))\geq \eta'(w_L) \geq 0$.
Thus $\partial h / \partial w_L\leq 0$, completing case 1.

Case 2: $w_L \in [\beta - \c (\a),\beta]$ implying $w_L\leq \beta \leq w_H$. 
Therefore $b_L=w_L$ and $b_H=\beta$ and the objective function reduces to:
\begin{align*}
    h_3(\a, w_L):&= \a\cdot (\H - w_L - \c (\a) + \beta - \P(\beta)) + (1-\a)\cdot (\L - \P(w_L)),\\
    \frac{\partial h_2}{\partial w_L}&=-\a -(1-\a)\cdot \P'(w_L)\leq 0
\end{align*}
thereby completing case 2. 

Case 3: $w_L \in [\beta,\infty)$ implying $b_i=\beta$. The objective function is:
\begin{align*}
    h_3(\a, w_L):&= \a\cdot (\H - w_L - \c (\a) + \beta - \P(\beta)) + (1-\a)\cdot (\L - w_L + \beta - \P(\beta)),\\
    \frac{\partial h_3}{\partial w_L}&=-1 \leq 0. 
\end{align*}
thereby completing all three cases and $\partial h / \partial w_L\leq 0$. 
Hence, since $h$ is continuous, it is optimal to make $w_L^*$ as small as possible without changing the value of any other variables. The feasibility constraints of the problem require that $w_L \geq u-\a\cdot \c (\a)+\C (\a)$ and  $w_L \geq 0$, indicating $w_L^*=\max\{0, u-\a\cdot \c (\a)+\C (\a)\}$.  

To establish uniqueness of $w_L^*$ when $\a\in (0,1)$, simply observe that all three cases result in $\partial h/\partial w_L< 0$ when $\a  \in (0,1)$ and when $w_L>0$. 
\end{proof}

\section{Proof of Proposition \ref{prop:existence}}\label{app:exist}

\Existence*

\begin{proof}
    The one-dimensional search problem is defined over continuous functions and, if the domain were compact, then the Weierstrass extreme value theorem would imply the existence of an optimal solution.  
    However, the function domain is $\a\in [0,1)$, an open space, and therefore the Weirstrass theorem does not immediately imply.  
    We instead show there exists an $\a'<1$ where the objective function is strictly decreasing for all $\a>\a'$, which will imply that it suffices to optimize over the compact domain $[0,\a']$ and guarantee the existence of an optimal solution.

    We first show that $\c(a)\to \infty$ as $a\to 1$. Since $\c$ is increasing ($\C$ is convex), it suffices to show $\c$ is unbounded.  
    Suppose for contradiction that $\c(\a)\leq w$, then $\lim_{a\to \infty} \C(\a) = \c(0) + \int_0^1 \c(a)da \leq \c(0) + \int_0^1 wda= \c(0) + w$ contradicting that $\C(\a)\to \infty$.

    Next, we analyze the function $w_L^*(\a)=\max\{0,u-a\cdot \c(\a) + \C(a)\}$ from Proposition \ref{prop:Wages} for $\a\geq 1/2$. 
    \begin{align*}
        \a\c(\a) - \C(a) & = \int_0^\a \alpha \c'(\alpha)d\alpha \\
        & = \int_0^{1/2} \alpha \c'(\alpha)d\alpha + \int_{1/2}^\a \alpha \c'(\alpha)d\alpha \\
        & \geq \int_0^{1/2} \alpha \c'(\alpha)d\alpha + \int_{1/2}^\a 
        \frac{1}{2}\c'(\alpha)d\alpha \tag{$\C$ is convex $\Rightarrow \c'\geq 0$}\\
        &= \frac{1}{2}\c\left(\frac{1}{2}\right)-\C\left(\frac{1}{2}\right) + \frac{1}{2}\c(\a) - \frac{1}{2}\c\left(\frac{1}{2}\right)\\
        &= \frac{1}{2}\c(\a) - \C\left(\frac{1}{2}\right) \to \infty \ as \ a\to 1.
    \end{align*}
    Thus, there is an $\a''_1<1$ such that $\a\c(\a) - \C(a)\geq u$ for all $\a > \a''_1$, implying $w_L^*(\a)=b_L^*(\a)=0$ for $\a > \a''_1$.  
    Further, we can select $\a''_2$ such that $\c(\a''_2) > \beta$ since $\c(\a)\to \infty$, hence $\c(\a)>\beta$ for all $\a \geq \a''_2$. Pick $\a'' = \max \{\a''_1, \a''_2\}$, 
    by Proposition \ref{prop:Wages}, $w_H^*(\a)=\c(\a)$ and $b_H^*(\a)=\beta$ for all $a> \a''$. 

    Recall the (\ref{eqn:1DSearch}) maximizes
\begin{align*}
    g(\a):&= \a\cdot \left( \H - w_H^*(\a) + b_H^*(\a) - \P(b_H^*(\a))\right) + (1-\a)\cdot \left( \L - w_L^*(\a) + b_L^*(\a) - \P(b_L^*(\a))\right)
\end{align*}
The derivative, when well-defined, is given by: 
\begin{align*}
     g'(\a) = \ & \left( \H - w_H^*(\a) + b_H^*(\a) - \P(b_H^*(\a))\right) - \a \cdot \left( \frac{dw_H^*}{d\a} - \frac{db_H^*}{d\a} + \frac{db_H^*}{d\a}\cdot \P'(b_H^*(\a))\right)\\
     & - \left( \L - w_L^*(\a) + b_L^*(\a) - \P(b_L^*(\a))\right)- (1-a)\cdot \left( \frac{dw_L^*}{d\a} - \frac{db_L^*}{d\a} + \frac{db_L^*}{d\a}\cdot \P'(b_L^*(\a))\right) 
\end{align*}
    While $g$ is continuous, $g'$ is discontinuous at the break points of $w_L$, $b_L$ and $b_H$. 
    However, by selection of $\a''$, $w_L(\a'')=b_L(\a'')=0$ and $b_H(\a'')=\beta$ and no additional break points occur when $\a>\a''$. 
    Thus, for $\a> \a''$, the derivative is well-defined and simplifies to 
    \begin{align*}
            g'(\a) = \ &  \H - w_H^*(\a) + \beta - \P(\beta) - \a \cdot  \frac{dw_H^*}{d\a}  -  \L  \\
                & = \H -  \L + \beta - \P(\beta) - \c(\a)  - \a \cdot  \c'(\a) \\
                &\to -\infty \ as \ a\to 1
    \end{align*}

    Thus, there is an $\a'\in (\a'', 1)$ such that $g(\a)$ is strictly decreasing for all $\a> \a'$.
    Therefore, it suffices to solve the (\ref{eqn:1DSearch}) over the compact domain $[0,a']$, and by the Weierstrass extreme value theorem, there exists an optimal solution to the (\ref{eqn:1DSearch}) with $a^*\in [0,\a']\subset [0,1)$. 
\end{proof}

\section{Proof of Proposition \ref{prop:positive}}\label{app:positive}
\Positive*

\begin{proof}
 We show the objective function of the (One-dimensional search problem) is such that $g'(0)>0$ implying $\a=0$ is sub-optimal. 
The derivative of the objective function is 
\begin{align*}
     g'(\a) = \ & \left( \H - w_H^*(\a) + b_H^*(\a) - \P(b_H^*(\a))\right) - \a \cdot \left( \frac{dw_H^*}{d\a} - \frac{db_H^*}{d\a} + \frac{db_H^*}{d\a}\cdot \P'(b_H^*(\a))\right)\\
     & - \left( \L - w_L^*(\a) + b_L^*(\a) - \P(b_L^*(\a))\right)- (1-a)\cdot \left( \frac{dw_L^*}{d\a} - \frac{db_L^*}{d\a} + \frac{db_L^*}{d\a}\cdot \P'(b_L^*(\a))\right) 
\end{align*}

By Proposition \ref{prop:Wages}, for sufficiently small $\a$, $w_L^*(\a)=\max\{0, u-\a\c(\a)+\C(\a)\}=u-\a\c(\a)+\C(\a)$.
Therefore $\frac{dw_L^*}{d\a}=-a\c'(\a)$ which is 0 when $\a=0$. 
 Note that $b_L^*(\a)= \min\{\beta, w_L^*(\a)\}$. For sufficiently small $\a$, if $\beta \leq w_L^*(a)$, then $b_L^*(\a)= \beta$ and $\frac{db_L^*}{d\a}|_{a=0}=0$. If $\beta > w_L^*(a)$, then $b_L^*(\a)= w_L^*(a)$ and hence $\frac{db_L^*}{d\a}|_{a=0}=\frac{dw_L^*}{d\a}|_{a=0}=0$. In both cases, we have $\frac{db_L^*}{d\a}|_{a=0}=0$.

Finally, $b_H^*=\min\{\beta, w_H^*\} \geq \min\{\beta, w_L^*\}=b_L^*$ and, by definition of $\beta$, $(b-\P(b))$ is non-decreasing for $b\leq \beta$ implying $(b_H^*-\P(b_H^*))-(b_L^*-\P(b_L^*))\geq 0$.
Therefore at $\a=0$, the derivative simplifies to:
\begin{align*}
    g'(0)&= \H - \L -(w_H^*(0)-w_L^*(0)) + (b_H^*(0)-\P(b_H^*(0)))-(b_L^*(0)-\P(b_L^*(0)))\\
        &= \H-\L -\c(0) + (b_H^*(0)-\P(b_H^*(0)))-(b_L^*(0)-\P(b_L^*(0)))\tag{By Proposition \ref{prop:Wages}}\\
        &\geq \H-\L -\c(0)>0
\end{align*}
when $\c(0)< \H-\L$, establishing the sufficient conditions. 

The necessary condition follows by the fact that $w_L^*(0)=u$,  $w_H^*(0)=u+\c(0)$. 
When $\beta\leq u$,  $b_H^*(0)=b_L^*(0)=\beta$ and the final line of $g'(0)$ holds with equality. 
\end{proof}

\section{Proofs of Theorems \ref{thm:optimalfixed} and \ref{thm:notheft.refined}}\label{app:Repeated}

We begin with Theorem \ref{thm:notheft.refined}. 

\dominate*

\begin{proof}
    Recall that when agents used fixed strategies with time-converging forecasts, that the repeated game reduces to the following one-stage game:
    \begin{align}
        \max_{\a, w_H, w_L, b_H, b_L} &\ \a\cdot (\H - w_H +b_H - \P(b_H))+(1-\a)\cdot (\L - w_L +b_L - \P(b_L)) \tag{Employer Profit}\\
      \a &\in \arg\max_{a'\in [0,1)} \left\{ \a'\cdot (w_H-b_H)^+ + (1-\a')\cdot (w_L-b_L) - \C(\a') \right\} \tag{Incentive Compatibility}\\
       \a&\cdot (w_H-b_H) + (1-\a)\cdot (w_L-b_L)^+ - \C(a) \geq u \tag{Individual Rationality}\\  
        0&\leq b_L \leq w_L, \ \
        0\leq b_H \leq w_H, \tag{Wage Bounds}\\
        \a& \in [0,1] \tag{Probability of High Outcome}
    \end{align}

    Consider an arbitrary feasible solution $(a,w_H,w_L,b_H,b_L)$ where $b_i>0$ for some $i\in \{H,L\}$. 
    As in the theorem statement, consider replacing $w_i$ with $w_i-b_i$ and $b_i$ with $b_i-b_i=0$ for each $i\in \{H,L\}$. 
    Then the updated solution remains feasible; the left-hand side values of the incentive compatibility and individual rationality constraints remain unchanged since the decrease in $w_i$ is offset by the equivalent decrease in $b_i$. 
    Furthermore, the objective value improves by exactly $\a\cdot \P(b_H)+(1-\a)\cdot \P (b_L) \geq 0$.
    Thus, the strategy $(a,w_H,w_L,b_H,b_L)$ is weakly dominated by $(a,w_H-b_H,w_L-b_L,0,0)$ and strict dominance occurs when $\P(b_H)+(1-\a)\cdot \P (b_L) >0$, e.g., when $a\in (0,1)$ and $b_i>0$.  
\end{proof}

Theorem \ref{thm:optimalfixed} then follows from Theorem \ref{thm:notheft.refined} and Proposition \ref{prop:positive}. 
Specifically, Theorem \ref{thm:optimalfixed} implies $b_i^*=0$ and the remainder of the proof follows identically to Proposition \ref{prop:positive} since for $b_i=0$, the repeated game with time-converging forecasts is equivalent to the original one-stage game. 

\optimalfixed*

\end{appendices}
\end{document}